\newtheorem{theorem}{Theorem}[section]
\newtheorem{lemma}[theorem]{Lemma}
\newtheorem{corollary}[theorem]{Corollary}
\newtheorem{prop}[theorem]{Proposition}
\theoremstyle{definition}
\newtheorem{definition}[theorem]{Definition}
\theoremstyle{remark}
\numberwithin{equation}{section}
\begin{document}

\title{Quasi-isometric rigidity of a class of right-angled Coxeter groups}

\author{Jordan Bounds}
\address{Department of Mathematics, Bowling Green State University, Bowling Green, Ohio;}
\email{boundsj@bgsu.edu}

\author{Xiangdong Xie}
\address{Department of Mathematics, Bowling Green State University, Bowling Green, Ohio;}
\email{xiex@bgsu.edu}

%    \subjclass is required.
\subjclass[2010]{Primary }

\date{}

\dedicatory{}

\begin{abstract}
	We establish quasi-isometric rigidity for a class of 
	right-angled Coxeter groups. Let $\Gamma_1$, $\Gamma_2$ be joins of finite generalized thick $m$-gons with $m\ge 3$. We show that the corresponding right-angled Coxeter groups are quasi-isometric if and only if $\Gamma_1$, $\Gamma_2$ are isomorphic. We also give a construction of commensurable right-angled Coxeter groups.
\end{abstract}

\maketitle

\begin{section}{Introduction}
	
	In this note we study the large scale geometry of right-angled Coxeter groups  (RACGs in short). 
	Specifically we establish the quasi-isometric rigidity of a class of  RACGs.

	Given a finite simplicial graph $\Gamma$ with vertex set $V(\Gamma)$ and  edge set $E(\Gamma)$, there is  an associated RACG $W_\Gamma$ 
	given by the presentation
	\[W_\Gamma=\langle v\in V(\Gamma)|     v^2=1 \;\text{for all}\; v\in V(\Gamma);   v_1v_2=v_2v_1\; \text{if and only if }\;  (v_1, v_2)\in E(\Gamma)\rangle.\]
	The Davis complex $\Sigma_\Gamma$ associated to  $W_\Gamma$ is a  CAT($0$) cube complex upon which $W_\Gamma$ acts geometrically (that is, properly and cocompactly by isometries). The Davis complex should be viewed as a  geometric model for the RACG. See Section 2.1 for its definition.
	
	Let $L$ be a connected bipartite graph whose vertices are colored red and blue such that no two adjacent vertices share the same color. $L$ is called a {\em generalized m-gon}     
	$(m\in\mathbb{N},m\geq 2)$ if it has the following two properties:\begin{enumerate}
		\item Given any pair of edges there is a circuit with combinatorial length $2m$ containing both.
		\item For two circuits $A_1,A_2$ of combinatorial length $2m$ that share an edge there is an isomorphism $f:A_1\rightarrow A_2$ that pointwise fixes $A_1\cap A_2$.
		
	\end{enumerate}
	A generalized $m$-gon is called {\em thick} if each vertex has valence at least 3. There are many finite generalized thick polygons, see Section 2.2 for more information. Our main result is as follows.
	
	\begin{theorem}\label{main}
		For $i=1,2$, let $\Gamma_i$ be a finite generalized thick $m_i$-gon with $m_i\ge 3$.
		Then any quasi-isometry $f: \Sigma_{\Gamma_1}\rightarrow \Sigma_{\Gamma_2}$ is at a finite distance from an  isometry. In particular,   $W_{\Gamma_1}$ and 
		$W_{\Gamma_2}$  are quasi-isometric if and only if $\Gamma_1$ and $\Gamma_2$ are isomorphic. 
	\end{theorem}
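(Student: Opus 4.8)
The plan is to use the fact that, since each $\Gamma_i$ is a generalized $m_i$-gon with $m_i\ge 3$, the Davis complex $\Sigma_{\Gamma_i}$ is a $2$-dimensional $\mathrm{CAT}(0)$ square complex all of whose vertex links are isomorphic to $\Gamma_i$. A generalized $m_i$-gon is bipartite of girth $2m_i\ge 6$, so every vertex link has girth $\ge 5$; hence $\Sigma_{\Gamma_i}$ contains no flat plane and is Gromov-hyperbolic (equivalently, $\Gamma_i$ has no induced $4$-cycle, so $W_{\Gamma_i}$ is word-hyperbolic by the standard hyperbolicity criterion for right-angled Coxeter groups). Re-metrizing each square as a hyperbolic quadrilateral with all four angles equal to $\pi/m_i$ (possible precisely because $m_i\ge 3$) turns $\Sigma_{\Gamma_i}$ into a $\mathrm{CAT}(-1)$ \emph{Fuchsian building}: its apartments are copies of $\mathbb{H}^2$ — the Davis complexes of the special subgroups generated by the $2m_i$-circuits of $\Gamma_i$ — and the branching along walls is governed by the (bi-regular) valences of $\Gamma_i$; the building axioms are inherited from the generalized-polygon axioms, and thickness forces every wall to lie in at least three chambers.

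First I would pass to the Gromov boundary. A quasi-isometry $f\colon\Sigma_{\Gamma_1}\to\Sigma_{\Gamma_2}$ of proper geodesic Gromov-hyperbolic spaces induces a quasi-Möbius homeomorphism $\partial f\colon\partial\Sigma_{\Gamma_1}\to\partial\Sigma_{\Gamma_2}$ of boundaries with visual metrics, and it suffices to prove that $\partial f$ is the boundary extension of an isometry $g\colon\Sigma_{\Gamma_1}\to\Sigma_{\Gamma_2}$, since a quasi-isometry and an isometry inducing the same boundary homeomorphism lie at bounded distance. Now $\partial\Sigma_{\Gamma_i}$ carries a canonical combinatorial structure: the boundaries of apartments form a distinguished family of round circles, and the boundaries of walls are distinguished separating Cantor subsets whose complementary components are indexed by the chambers adjacent to the wall. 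This data recovers all chambers, walls, apartments and their incidences, hence the building itself, whose vertex links are exactly the $\Gamma_i$. So once one knows that $\partial f$ preserves this structure, $\Sigma_{\Gamma_1}$ and $\Sigma_{\Gamma_2}$ are isomorphic as buildings; the isomorphism is realized by an isometry $g$, its restriction to each apartment is a Coxeter-complex automorphism whose boundary extension agrees with $\partial f$ on the corresponding circle (the wall endpoints being dense there), so $\partial g=\partial f$, and comparing vertex links gives $\Gamma_1\cong\Gamma_2$, hence $m_1=m_2$.

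The heart of the proof — and the step I expect to be the main obstacle — is to show that $\partial f$ does preserve this combinatorial structure, i.e.\ that it carries apartment circles to apartment circles and wall boundaries to wall boundaries compatibly with incidence. The strategy is to characterize the apartment circles inside $\partial\Sigma_{\Gamma_i}$ in a quasi-Möbius-invariant way — for instance as the maximal subsets quasi-Möbius homeomorphic to a standard circle, or via Bourdon's combinatorial-modulus / conformal-dimension invariants for Fuchsian buildings — and here both hypotheses are used essentially: thickness $\ge 3$ is what makes an apartment circle maximal (a valence-$2$ vertex would let it be enlarged), and $m_i\ge 3$ is what makes $\Sigma_{\Gamma_i}$ genuinely hyperbolic (for $m_i=2$ the graph $\Gamma_i$ is complete bipartite and $W_{\Gamma_i}$ contains $\mathbb{Z}^2$). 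In carrying this out I would either invoke an existing quasi-isometric rigidity theorem for Fuchsian buildings and verify that $\Sigma_{\Gamma_i}$ satisfies its hypotheses, or run the quasi-Möbius analysis directly, using the $\mathrm{CAT}(0)$ cube-complex structure and the homogeneity supplied by the generalized-polygon axioms. The reverse implication in the theorem is immediate: if $\Gamma_1\cong\Gamma_2$ then $W_{\Gamma_1}\cong W_{\Gamma_2}$, so the two groups are quasi-isometric.
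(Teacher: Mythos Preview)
Your proposal is correct and follows the same overall architecture as the paper: re-metrize each square of $\Sigma_{\Gamma_i}$ as a regular hyperbolic $4$-gon with vertex angle $\pi/m_i$, recognize the result as a Fuchsian building, and then invoke quasi-isometric rigidity of Fuchsian buildings (the paper cites Xie's theorem, your option ``invoke an existing quasi-isometric rigidity theorem''). The conclusion via vertex links is identical.

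The emphasis, however, is inverted. You treat the verification that $\Sigma_{\Gamma_i}$ is a Fuchsian building as essentially automatic (``the building axioms are inherited from the generalized-polygon axioms''), and devote most of your outline to the boundary/quasi-M\"obius analysis. In the paper the balance is exactly the opposite: the rigidity step is a one-line citation of Theorem~\ref{xie}, while the bulk of the work (Sections~3--4) goes into establishing the Fuchsian building structure. Two points there are less immediate than your sketch suggests. First, one needs a consistent labeling of edges by $\{1,2,3,4\}$ so that every square is cyclically labeled and edges of the same label have the same degree; the paper obtains this via a monodromy argument propagating local labelings along edge paths (Proposition~\ref{prop}, Lemmas~\ref{4-cycle}--\ref{degree}). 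Second, the existence of an apartment through any two chambers is not a formal consequence of the link being a generalized polygon: the paper constructs it by growing a nested sequence of convex disks around the geodesic joining the two chambers (Lemmas~\ref{m+1}--\ref{disk extension}), using the generalized-polygon structure of the links at each extension step. Your identification of apartments with Davis complexes of $2m_i$-circuit subgroups is correct, but showing that any two chambers lie in one such still requires an argument of this kind. Once that is done, your boundary discussion is subsumed by the cited rigidity theorem and need not be carried out.
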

	
	As an immediate consequence, we have the following corollary:
	
	\begin{corollary}\label{cor1}
		Let $\Gamma$ be a finite generalized thick $m$-gon with $m\ge 3$.   If a finitely generated group $G$ is quasi-isometric to $W_\Gamma$, then $G$ acts geometrically on $\Sigma_\Gamma$.
	\end{corollary}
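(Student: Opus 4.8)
The plan is to run the standard deduction of a geometric action from quasi-isometric rigidity, with Theorem~\ref{main} as the rigidity input. I first pass from $W_\Gamma$ to $\Sigma_\Gamma$: since $W_\Gamma$ acts geometrically on $\Sigma_\Gamma$, the Milnor--\v{S}varc lemma gives that $W_\Gamma$ and $\Sigma_\Gamma$ are quasi-isometric, so $G$ is quasi-isometric to $\Sigma_\Gamma$. Fix a word metric on $G$, an $(L,A)$-quasi-isometry $\phi\colon G\to\Sigma_\Gamma$, and a quasi-inverse $\bar\phi\colon\Sigma_\Gamma\to G$. For $g\in G$ let $\lambda_g\colon G\to G$ be left translation (an isometry of $G$) and set $\psi_g:=\phi\circ\lambda_g\circ\bar\phi\colon\Sigma_\Gamma\to\Sigma_\Gamma$. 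Each $\psi_g$ is then an $(L',A')$-quasi-isometry whose constants depend only on $L,A$ and the quasi-inverse constants and \emph{not} on $g$; moreover $\psi_e$ is a bounded distance from $\mathrm{id}_{\Sigma_\Gamma}$ and $\psi_{gh}$ is a bounded distance from $\psi_g\circ\psi_h$, so $g\mapsto\psi_g$ is a cobounded quasi-action of $G$ on $\Sigma_\Gamma$.

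By Theorem~\ref{main} (with $\Gamma_1=\Gamma_2=\Gamma$) each $\psi_g$ lies at finite distance from an isometry $\rho(g)\in\mathrm{Isom}(\Sigma_\Gamma)$. The step I expect to be the main obstacle is to make this distance \emph{uniform}, i.e.\ to find $D=D(L',A')$ with $d(\psi_g(x),\rho(g)(x))\le D$ for all $g$ and $x$. For this one uses that $\mathrm{Isom}(\Sigma_\Gamma)$ acts cocompactly on $\Sigma_\Gamma$ (it contains $W_\Gamma$) and properly (as $\Sigma_\Gamma$ is a proper geodesic space): were the distances unbounded along some $(g_n)$, one recenters the $\psi_{g_n}$ by isometries so that their images of a basepoint stay bounded, extracts an Arzel\`{a}--Ascoli-type limit which is again an $(L',A')$-quasi-isometry, applies Theorem~\ref{main} to the limit, and then --- using the stability of quasigeodesics in the Gromov hyperbolic space $\Sigma_\Gamma$ (it is hyperbolic by Moussong's criterion, since a thick generalized $m$-gon with $m\ge3$ has girth $2m\ge6$, so $\Gamma$ contains no induced $4$-cycle) --- shows the $\psi_{g_n}$ themselves were uniformly close to isometries, a contradiction.

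With the uniform bound $D$ in hand, $\rho(g)\rho(h)$ and $\rho(gh)$ are isometries of $\Sigma_\Gamma$ a bounded distance apart; since $\Sigma_\Gamma$ admits no nontrivial isometry of bounded displacement (a bounded-displacement isometry preserves each wall of $\Sigma_\Gamma$ setwise, and the walls separate its vertices), the natural map $\mathrm{Isom}(\Sigma_\Gamma)\to\mathrm{QI}(\Sigma_\Gamma)$ is injective and $\rho\colon G\to\mathrm{Isom}(\Sigma_\Gamma)$ is a homomorphism. Finally I verify that the resulting isometric action is geometric. Fixing a basepoint $o\in\Sigma_\Gamma$ and putting $g_0=\bar\phi(o)$, the bound $D$ gives $d(\rho(g)(o),\phi(gg_0))\le D$ for all $g$; since $g\mapsto\phi(gg_0)$ is a quasi-isometry $G\to\Sigma_\Gamma$, the orbit $\rho(G)\cdot o$ is coarsely dense (so the action is cocompact) and the orbit map $g\mapsto\rho(g)(o)$ is a quasi-isometric embedding, which forces the action to be metrically proper, hence properly discontinuous with finite kernel. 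Thus $\rho$ exhibits the required geometric action of $G$ on $\Sigma_\Gamma$, and apart from the uniformity step in the second paragraph the argument is routine bookkeeping with the Milnor--\v{S}varc lemma and the quasi-action formalism.
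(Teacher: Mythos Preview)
The paper does not give a separate proof of Corollary~\ref{cor1}; it is simply stated as ``an immediate consequence'' of Theorem~\ref{main}. Your argument is exactly the standard quasi-action argument that unpacks this ``immediate'' step, and it is correct.

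Two small comments on points you flagged. First, the uniformity of the distance between $\psi_g$ and $\rho(g)$ can be obtained more cheaply than by your limiting argument: since $\mathrm{Isom}(\Sigma_\Gamma)\supset W_\Gamma$ acts cocompactly, you may precompose each $\psi_g$ by an isometry so that a fixed basepoint lands in a fixed compact set; there are then only finitely many combinatorial types of such $(L',A')$-quasi-isometries on any given ball, and the bound from Theorem~\ref{main} (via Theorem~\ref{xie}) applied to this normalized family is automatically uniform. Alternatively, the proof in \cite{Xie06} already produces a bound depending only on the quasi-isometry constants. Second, for ``no nontrivial bounded-displacement isometry'' the wall argument works, but since the paper equips $\Sigma_\Gamma$ with a CAT($-1$) metric (Section~4) there is a cleaner route: a bounded-displacement isometry of a CAT($-1$) space fixes every bi-infinite geodesic setwise, and with at least three boundary points this forces it to fix each such geodesic pointwise, hence to be the identity.
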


	For each $m\ge 3$, let ${\mathcal S}_m$ be the collection of finite generalized thick $m$-gons.
	Let $\mathcal S=\cup_m \mathcal S_m$.
	By combining Theorem \ref{main} with a result of  Kapovich-Kleiner-Leeb \cite{KKL98}
	on the   quasi-isometric rigidity of product spaces, we obtain:
	
	\begin{corollary}\label{cor2}
		Let $\Gamma_1$, $\Gamma_2$ be finite joins of graphs from $\mathcal S$. Then any quasi-isometry
		$f: \Sigma_{\Gamma_1}\rightarrow  \Sigma_{\Gamma_2}$ is at a finite distance from an
		isometry. In particular,   $W_{\Gamma_1}$ and 
		$W_{\Gamma_2}$  are quasi-isometric if and only if $\Gamma_1$ and $\Gamma_2$ are isomorphic. 
	\end{corollary}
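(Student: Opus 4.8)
The plan is to deduce Corollary \ref{cor2} from Theorem \ref{main} together with the quasi-isometric product rigidity theorem of Kapovich--Kleiner--Leeb \cite{KKL98}. Write $\Gamma_1 = C_1 \ast \cdots \ast C_p$ and $\Gamma_2 = D_1 \ast \cdots \ast D_q$ as finite joins, with every $C_i$ and every $D_j$ lying in $\mathcal S$. Since the right-angled Coxeter group of a join is the direct product of the right-angled Coxeter groups of the factors, and the Davis complex of a direct product of right-angled Coxeter groups is the product of the corresponding Davis complexes, we obtain canonical isometric identifications $\Sigma_{\Gamma_1} = \Sigma_{C_1} \times \cdots \times \Sigma_{C_p}$ and $\Sigma_{\Gamma_2} = \Sigma_{D_1} \times \cdots \times \Sigma_{D_q}$ (products of CAT($0$) cube complexes, taken with the $\ell^2$ product metric).

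First I would verify that each factor satisfies the hypotheses of \cite{KKL98}: each $\Sigma_{C_i}$ (and likewise each $\Sigma_{D_j}$) is a proper geodesic metric space that is coarsely irreducible, meaning it is not quasi-isometric to a product of two unbounded spaces, and in particular not quasi-isometric to a point or a line. This is where the assumption $m \ge 3$ enters: a finite generalized thick $m$-gon with $m \ge 3$ has girth $2m \ge 6$, so it contains no induced $4$-cycle, and hence $W_{C_i}$ is word-hyperbolic (it is standard that a right-angled Coxeter group is hyperbolic exactly when its defining graph has no induced square). Moreover $W_{C_i}$ is non-elementary, being infinite and not virtually cyclic since $C_i$ is neither a single vertex nor a single edge; and a non-elementary hyperbolic space contains no quasi-isometrically embedded quarter-plane, so it cannot be quasi-isometric to a product of two unbounded geodesic spaces.

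Next I would apply the product rigidity theorem to the given quasi-isometry $f \colon \Sigma_{C_1} \times \cdots \times \Sigma_{C_p} \to \Sigma_{D_1} \times \cdots \times \Sigma_{D_q}$. It yields $p = q$, a bijection of the index sets (after which I relabel so that this bijection is the identity), and quasi-isometries $f_i \colon \Sigma_{C_i} \to \Sigma_{D_i}$ such that $f$ lies at finite distance from the product map $f_1 \times \cdots \times f_p$. For each $i$, both $C_i$ and $D_i$ are finite generalized thick polygons with parameter at least $3$, so Theorem \ref{main} applies to $f_i$: it lies at finite distance from an isometry $g_i \colon \Sigma_{C_i} \to \Sigma_{D_i}$, and in particular $C_i \cong D_i$. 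The product $g := g_1 \times \cdots \times g_p$ is then an isometry $\Sigma_{\Gamma_1} \to \Sigma_{\Gamma_2}$; since each $g_i$ is at finite distance from $f_i$ and there are finitely many factors, $g$ is at finite distance from $f_1 \times \cdots \times f_p$, hence from $f$. This proves the first assertion, and it also shows $\Gamma_1 = C_1 \ast \cdots \ast C_p \cong D_1 \ast \cdots \ast D_p = \Gamma_2$; the converse implication is immediate, since if $\Gamma_1 \cong \Gamma_2$ then $W_{\Gamma_1} \cong W_{\Gamma_2}$ and the Davis complexes are isometric, hence quasi-isometric.

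The hard part will be making the verification in the second paragraph precise enough to invoke \cite{KKL98} in the exact form stated there --- in particular confirming that the coarse-irreducibility hypotheses (and any auxiliary finiteness or connectedness-at-infinity conditions) on the factors are met, and that the $\ell^2$ product structure on the Davis complex of a join is the product structure to which that theorem applies (which it is, since the $\ell^1$ and $\ell^2$ products of the same finite family of factors are bi-Lipschitz equivalent). Once those points are in place, the remainder is a purely formal combination of the product rigidity theorem with Theorem \ref{main}, and the degenerate case $p = 1$ just recovers Theorem \ref{main} itself.
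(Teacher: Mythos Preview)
Your proposal is correct and follows essentially the same strategy as the paper: decompose the Davis complexes as products via the join decomposition, apply the Kapovich--Kleiner--Leeb product rigidity theorem to split the quasi-isometry into factor quasi-isometries, and then invoke Theorem \ref{main} on each factor. The only difference is in how the KKL hypotheses are verified: the paper quotes the ``coarse type I or II'' formulation of \cite{KKL98} and checks it by noting that each $\Sigma_{C_i}$ is Gromov hyperbolic, hence has $\mathbb R$-tree asymptotic cones (coarse type I), whereas you argue via coarse irreducibility of non-elementary hyperbolic spaces --- both routes work and lead to the same conclusion.
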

	
	The key point in the proof of  Theorem \ref{main}   is  that  when $\Gamma$ is a finite generalized thick $m$-gon with $m\ge 3$ 
	$\Sigma_\Gamma$ admits a metric structure that makes it a Fuchsian building. Theorem \ref{main} then follows from the  quasi-isometric rigidity for Fuchsian buildings \cite{Xie06}.    Under the assumptions 
	of Theorem \ref{main}  the Davis complex is a CAT($0$) square complex, where each $2$-cell is isometric to a Euclidean unit square.   We replace each square with a regular $4$-gon in the hyperbolic plane with angle $\pi/m$ at each vertex.   The Davis complex with this piecewise hyperbolic metric is then a Fuchsian building (see Theorem \ref{Thm}).
	
	There exist non-isomorphic graphs $\Gamma_1$, $\Gamma_2$ such that $W_{\Gamma_1}$, $W_{\Gamma_2}$ are quasi-isometric. In Section 5, we give a construction that produces commensurable RACGs. Another way to construct quasi-isometric RACGs is to use the ``fattened tree'' method of Behrstock-Neuman (see \cite{DT14}, \cite{BN08}, \cite{Mal10}, \cite{CM17}).
	
	The quasi-isometric classification of RACGs is wide open at this point. Here we give a very brief  summary of the current state of the art. For more details see the survey article by Pallavi Dani \cite{D17}.  By a result of Behrstock-Caprace-Hagen-Sisto \cite{BHS17}, the RACGs are divided into two classes, those that are
	algebraically thick, and those that are relatively hyperbolic.    Algebraically thick ones are not quasi-isometric to relatively hyperbolic ones.   Divergence is a quasi-isometric invariant for finitely generated groups.  Algebraically thick  RACGs have divergence bounded above by a polynomial function \cite{BD14}, and relatively hyperbolic RACGs have exponential divergence function  (\cite{S12}, Theorem 1.3).  It is still an open question whether the divergence of an
	algebraically thick  RACG   is exactly polynomial. 
	In \cite{NT17},  Nguyen-Tran  classified up to quasi-isometry a class of algebraically thick RACGs whose graphs  are planar  $\mathcal{CFS}$ graphs.   Dani-Thomas \cite{DT14}   classified up to quasi-isometry a class of 2 dimensional hyperbolic 
	RACGs that split over $2$-ended subgroups.     Results from \cite{DT14} and 
	\cite{HST17} together  classified  up to quasi-isometry all RACGs whose graphs are generalized theta graphs.  
	
	The paper is structured as follows. We recall some preliminaries about RACGs, generalized polygons and Fuchsian buildings in Section 2. In Section 3 we show that the Davis complex associated to the graph of a generalized $m$-gon admits the structure of a labeled 2-complex. In Section 4 we prove Theorem \ref{main} and Corollary \ref{cor2}. We conclude with Section 5 where we describe a method for constructing examples of commensurable, and therefore quasi-isometric, RACGs. \\ 
	
	\noindent {\bf{Acknowledgment}}. {The second author   acknowledges support from Simons Foundation grant \#315130.  }
	
\end{section}
\begin{section}{Preliminaries}
	
	\begin{subsection}{Right-angled Coxeter groups}
		
		A graph is called {\em simplicial} if it has no loops (an edge whose initial and terminal vertices are equal) and there is at most one edge between every pair of vertices. An {\em n-clique} is the complete graph on $n$ vertices. 
		
		\begin{definition} Given a finite simplicial graph $\Gamma$ with vertex set $V(\Gamma)$ and edge set $E(\Gamma)$ the associated {\em right-angled Coxeter group} (RACG) $W_\Gamma$ is the group defined by the following presentation:
			\[W_\Gamma=\langle v\in V(\Gamma)|     v^2=1 \;\text{for all}\; v\in V(\Gamma);   v_1v_2=v_2v_1\; \text{if and only if }\;  (v_1, v_2)\in E(\Gamma)\rangle.\]
			By a theorem of Moussong $W_\Gamma$ is Gromov hyperbolic if and only if $\Gamma$ has no induced 4-cycles \cite{Mou88}. It is also well known that $\Gamma$ decomposes as a graph join $\Gamma=\Gamma_1\star\Gamma_2$ if and only if $W_\Gamma=W_{\Gamma_1}\times W_{\Gamma_2}$.
			
		\end{definition}
		To each RACG $W_\Gamma$ there is an associated CAT($0$) cube complex upon which $W_\Gamma$ acts geometrically called the {\em Davis complex associated to} $W_\Gamma$, denoted by $\Sigma_\Gamma$. Its original construction and definition can be found in \cite{Dav08}. We provide a summary of the construction here as seen in \cite{D17}.
		
		\begin{definition}
			Let $\Gamma$ be a finite simplicial graph with $n$ vertices. Let $C=[-1,1]^n\subset\mathbb{R}^n$. Fix an ordering on the vertex set $V(\Gamma)$ of $\Gamma$ so that the $i$th generator of $W_\Gamma$ corresponds to the $i$th factor of $C$. Then each face of $C$ determines a unique subset of $V(\Gamma)$. Let $P_L$ be the subcomplex of $C$ which contains a face $F\subset C$ if and only if the subset of $V(\Gamma)$ that corresponds to $F$ forms a clique in $\Gamma$. The Davis complex $\Sigma_\Gamma$ associated to $W_\Gamma$ is the universal cover of $P_L$.
			
		\end{definition}
		We note that $\Sigma_\Gamma$ is a two dimensional cube complex if and only if the defining graph $\Gamma$ is triangle free and contains at least one edge.

	\end{subsection}
	\begin{subsection}{Generalized polygons}
		We now briefly recall some important facts regarding generalized polygons as defined in the introduction. For a generalized thick $m$-gon, we often call an edge a {\em chamber} and a circuit with combinatorial length $2m$ an {\em apartment}. Two vertices of the same color are said to have the same type.
		
		For $m\geq 3$ there exist many examples of finite generalized $m$-gons (see \cite{Mal} Section 1.7). A well known theorem of Feit-Higman \cite{FH64} says that if $L$ is a finite generalized thick $m$-gon, then $m$ is one of \{2,3,4,6,8\}. For a generalized thick $m$-gon, there are integers $r,b\geq 2$ such that each red vertex is contained in exactly $r+1$ chambers and each blue vertex is contained in exactly $b+1$ chambers (see p.29 of \cite{MR89}). It is known that $r=b$ in the case where $m$ is odd and $r\neq b$ when $m=8$.
		
		Let $L$ be a generalized $m$-gon. For each chamber in $L$ we equip a metric such that it is isometric to the closed interval of length $\pi/m$. We then equip $L$ with the path metric, making $L$ a CAT($1$) space. Every apartment $A$ in $L$ is then convex in $L$, meaning that for any $x,y\in A$ with $d(x,y)<\pi$ the geodesic segment $xy$ also lies in $A$. Thus, if two apartments $A_1,A_2$ share a chamber, then either $A_1=A_2$ or $A_1\cap A_2$ is a segment.
		
	\end{subsection}
	\begin{subsection}{Fuchsian buildings}
		
		Let $R$ denote a compact convex polygon in $\mathbb{H}^2$ whose angles are of the form $\pi/m$, $m\in\mathbb{N}$, $m\geq 2$. Let $W$ be the Coxeter group generated by the reflections about the edges of $R$. 
		We label the edges and vertices of $R$ cyclically by $\{1\}, \{2\},\dots, \{k\}$ and $\{1,2\}, \dots, \{k-1,k\}, \{k,1\}$ respectively so that the edges $i$ and $i+1$ intersect at the vertex $\{i,i+1\}$. It is well-known that the images of $R$ under $W$ form a tessellation of $\mathbb{H}^2$ and that the quotient $\mathbb{H}^2/W=R$. Thus, there is a labeling of edges and vertices of the tessellation of $\mathbb{H}^2$ that is $W$-invariant and compatible with the labeling of $R$. Let $A_R$ denote the obtained labeled 2-complex.
		
		\begin{definition}\label{fuch}
			Let $\Delta$ be a connected cellular 2-complex whose edges and vertices are labeled by $\{1\}, \{2\},\dots,\{k\}$ and $\{1,2\},\{2,3\},\dots\{k-1,k\},\{k,1\}$ respectively, such that each 2-cell (called a {\em chamber}) is isomorphic to $R$ as labeled 2-complexes. We call $\Delta$ a {\em two dimensional hyperbolic building} if it has a family of subcomplexes (called apartments) isomorphic to $A_R$ (as labeled 2-complexes) satisfying the following properties:
			\begin{enumerate}
				\item Given any two chambers there is an apartment containing both.
				\item For any two apartments $A_1,A_2$ that share a chamber there is an isomorphism of labeled 2-complexes $f:A_1\rightarrow A_2$ which pointwise fixes $A_1\cap A_2$.
			\end{enumerate}
			If in addition there are integers $q_i\geq 2, i=1,2,\dots, k$, such that each edge of $\Delta$ labeled by $i$ is contained in exactly $q_i+1$ chambers, then $\Delta$ is called a {\em Fuchsian building}.
			
		\end{definition}
		The following result established in \cite{Xie06} is due to one of the authors.
		\begin{theorem}\label{xie}
			Let $\Delta_1,\Delta_2$ be two Fuchsian buildings and $g:\Delta_1\rightarrow\Delta_2$ a quasi-isometry. If $\Delta_1,\Delta_2$ admit cocompact lattices, then $g$ lies at a finite distance from an isomorphism.
		\end{theorem}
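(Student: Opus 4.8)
The plan is to translate the problem into one about the boundaries at infinity. A Fuchsian building $\Delta$, with each chamber metrized as the hyperbolic polygon $R$, is a piecewise-hyperbolic $2$-complex whose vertex links are metric graphs all of whose closed geodesics have length at least $2\pi$; by the link condition $\Delta$ is locally CAT($-1$), and since a building is contractible it is globally CAT($-1$). Finiteness of the $q_i$ (a consequence of the existence of a cocompact lattice) makes $\Delta$ proper. Thus $\Delta_1$ and $\Delta_2$ are proper geodesic Gromov-hyperbolic spaces, and the quasi-isometry $g$ extends to a quasi-M\"obius homeomorphism $\partial g\colon\partial_\infty\Delta_1\to\partial_\infty\Delta_2$ of their boundaries equipped with visual metrics. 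It then suffices to establish two things: (a) $\partial g$ is the boundary map of an isomorphism of labeled $2$-complexes $\phi\colon\Delta_1\to\Delta_2$; and (b) any such $\phi$ lies at finite distance from $g$.

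For (a), the idea is to recover the combinatorial skeleton of each building from the conformal geometry of its boundary. An apartment is an isometrically embedded copy of the tessellated $\mathbb{H}^2$, so its limit set is a ``round'' topological circle in $\partial_\infty\Delta_j$, while a wall (the fixed-point set of a reflection inside an apartment) is a convex subtree whose limit set is a distinguished closed subset. Write $\mathcal A_j$ and $\mathcal W_j$ for the families of apartment circles and of wall limit sets, respectively. The technical core is to characterize $\mathcal A_j$ and $\mathcal W_j$ by a property invariant under quasi-M\"obius maps --- for example, apartment circles as the topological circles carrying maximal combinatorial modulus in the sense of Pansu and Bourdon--Pajot, and wall limit sets as the loci along which the boundary branches --- so as to conclude that $\partial g(\mathcal A_1)=\mathcal A_2$ and $\partial g(\mathcal W_1)=\mathcal W_2$. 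The thickness $q_i\ge 2$ and the structure of the links enter here through the modulus estimates, and cocompactness makes those estimates uniform (so that $\partial_\infty\Delta_j$ is Ahlfors-regular and satisfies a Loewner-type condition). In particular the conformal dimension and the modulus data of the boundary, being quasi-M\"obius invariants, force the two chamber polygons and the parameters $q_i$ to agree.

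Granting that $\partial g$ preserves apartment circles and wall limit sets, one reconstructs $\phi$ combinatorially: the nesting pattern of the wall limit sets passing through a given chamber determines that chamber, and adjacency of chambers is likewise visible at infinity, so $\partial g$ induces a bijection between the chambers, walls and vertices of $\Delta_1$ and those of $\Delta_2$ that respects all incidences and labels. This combinatorial isomorphism is realized by a cellular isometry $\phi\colon\Delta_1\to\Delta_2$ whose boundary map is $\partial g$. For (b), set $\psi:=\phi^{-1}\circ g$, a quasi-isometry of $\Delta_1$ inducing the identity on $\partial_\infty\Delta_1$. By cocompactness, every point of $\Delta_1$ lies within a universally bounded distance of the coarse center of some ideal geodesic triangle; the three vertices of that triangle are fixed by $\partial\psi$, so the Morse lemma in the CAT($-1$) space $\Delta_1$ forces $\psi$ to move the center, and hence the original point, a bounded amount. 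Therefore $\psi$ is at finite distance from the identity, and $g$ is at finite distance from the isometry $\phi$.

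I expect the main obstacle to be the middle step: proving that the quasi-M\"obius boundary homeomorphism sends apartment circles to apartment circles and wall limit sets to wall limit sets. This requires the combinatorial-modulus machinery on $\partial_\infty\Delta_j$, a careful analysis of how the parameters $m_i$ and $q_i$ control the modulus of the relevant curve families, and the uniformity supplied by the cocompact lattice; once it is in place, the combinatorial reconstruction of $\phi$ and the final distance estimate are comparatively formal.
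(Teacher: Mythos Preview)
The paper does not prove this theorem. Theorem~\ref{xie} is quoted from \cite{Xie06} as an input to the present work: the sentence immediately preceding it reads ``The following result established in \cite{Xie06} is due to one of the authors,'' and no argument is given. So there is no ``paper's own proof'' against which to compare your proposal; the authors simply invoke the result as a black box and then apply it in the proof of Theorem~\ref{main}.

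That said, a brief remark on your sketch relative to what actually happens in \cite{Xie06}: your overall architecture---pass to the Gromov boundary, show the induced boundary map respects the combinatorial structure (apartments, walls), rebuild a cellular isomorphism, and then bound the distance---is correct in spirit. However, the mechanism you propose for the crucial middle step is not the one Xie uses. You invoke combinatorial modulus and Loewner-type estimates \`a la Bourdon--Pajot; that machinery is tailored to Fuchsian buildings of specific type (right-angled, constant thickness) and does not readily handle the general case treated in \cite{Xie06}. Xie's argument instead proceeds by a topological analysis of the boundary: one identifies wall limit sets via their local separation properties (branch points, local cut-point structure) rather than via modulus, and shows directly that a quasi-M\"obius homeomorphism must preserve this structure. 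Your proposal is therefore not wrong as a strategy, but the specific tool you name for the hard step would likely not suffice in the stated generality, and in any case it is not what the cited reference does.
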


	\end{subsection}
\end{section}

\begin{section}{The Davis complex as a labeled 2-complex}
	In this section we show that the Davis complex $\Sigma_\Gamma$ associated to the graph of a generalized $m$-gon $\Gamma$ has the structure of a labeled $2$-complex. Throughout, given $i\in\{1,2,3,4\}$ we will set $i+1=1$ if $i=4$.
	
	\begin{prop}\label{prop}
		Let $\Gamma$ be the graph of a generalized $m$-gon. Then there is a labeling of the edges of $\Sigma_\Gamma$ by $\{1\},\{2\},\{3\},\{4\}$ and a labeling of the vertices by $\{1,2\}, \{2,3\}, \{3,4\}, \{4,1\},$ such that for every 2-cell $S$ in $\Sigma_\Gamma$ its edges are cyclically labeled by $1,2,3,4$ and the vertex of $S$ incident to the edges labeled by $i,i+1$ is labeled by $\{i,i+1\}$.
	\end{prop}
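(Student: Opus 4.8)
The plan is to read the labeling off a pair of $\mathbb{Z}/2\mathbb{Z}$-valued ``parity'' functions on the vertex set of $\Sigma_\Gamma$, coming from the bipartition of $\Gamma$ together with the simple connectivity of $\Sigma_\Gamma$. First I would set up colours. Since $\Gamma$ is, by definition, a connected bipartite graph with colour classes $R$ (red) and $B$ (blue), it is triangle-free and has edges, so $\Sigma_\Gamma$ is a two-dimensional square complex as recalled in Section~2.1. Every edge of $\Sigma_\Gamma$ covers an edge of $P_L\subset C=[-1,1]^n$, and hence points in a unique coordinate direction $v\in V(\Gamma)$; I call the edge \emph{red} or \emph{blue} according to the colour of $v$. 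Every $2$-cell of $\Sigma_\Gamma$ covers a $2$-cell of $P_L$, i.e.\ a $2$-face of $C$ whose two coordinate directions $v_1,v_2$ satisfy $(v_1,v_2)\in E(\Gamma)$; its boundary $4$-cycle has two opposite edges in direction $v_1$ and two opposite edges in direction $v_2$, and bipartiteness forces $v_1,v_2$ into different colour classes, so the boundary of each $2$-cell of $\Sigma_\Gamma$ is coloured, in cyclic order, red, blue, red, blue.

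Next I would fix a base vertex $x_0$ and, for each vertex $v$, let $\psi(v)$ and $\psi'(v)$ be the numbers of red, respectively blue, edges in an edge-path from $x_0$ to $v$, taken mod $2$. These are well defined: sending a loop at $x_0$ to its red-count mod $2$ is a homomorphism $\pi_1(\Sigma_\Gamma^{(1)},x_0)\to\mathbb{Z}/2\mathbb{Z}$, and since $\Sigma_\Gamma$ is simply connected and two-dimensional, $\pi_1(\Sigma_\Gamma^{(1)},x_0)$ is the normal closure of the $2$-cell boundary cycles, each of which has red-count $2\equiv0$; so this homomorphism is trivial, and likewise for blue. Hence $(\psi,\psi')\colon V(\Sigma_\Gamma)\to\mathbb{Z}/2\mathbb{Z}\times\mathbb{Z}/2\mathbb{Z}$ is well defined, and it changes only in its first coordinate across a red edge and only in its second coordinate across a blue edge.

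Finally I would transport labels along the bijection $\mathbb{Z}/2\mathbb{Z}\times\mathbb{Z}/2\mathbb{Z}\to\{\{1,2\},\{2,3\},\{3,4\},\{4,1\}\}$ given by $(0,0)\mapsto\{4,1\}$, $(1,0)\mapsto\{1,2\}$, $(1,1)\mapsto\{2,3\}$, $(0,1)\mapsto\{3,4\}$, and let $\tau(v)$ be the image of $(\psi(v),\psi'(v))$; this bijection is chosen so that toggling one coordinate passes between two labels meeting in exactly one index. Since the endpoints $u,w$ of any edge of $\Sigma_\Gamma$ differ in exactly one of $\psi,\psi'$, their labels meet in exactly one index, and I label that edge by the singleton $\tau(u)\cap\tau(w)$. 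To verify the two assertions I would take a $2$-cell $S$ with boundary vertices $a_1,a_2,a_3,a_4$ in cyclic order, $a_1a_2$ and $a_3a_4$ red, $a_2a_3$ and $a_4a_1$ blue; toggling the first, second, first, second coordinates in turn shows the four values $(\psi,\psi')(a_j)$ are all distinct, so $\tau$ realizes all of $\{4,1\},\{1,2\},\{2,3\},\{3,4\}$ on the vertices of $S$ in a cyclic order with consecutive labels meeting in one index. That cyclic order is unique up to rotation and reflection, so the four edge labels $\tau(a_j)\cap\tau(a_{j+1})$ of $S$ form, in cyclic order, $\{1\},\{2\},\{3\},\{4\}$, while the vertex $a_j$ lies on the two edges of $S$ labeled $\tau(a_{j-1})\cap\tau(a_j)$ and $\tau(a_j)\cap\tau(a_{j+1})$, whose union is exactly $\tau(a_j)$ --- which is precisely the required compatibility.

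I do not foresee a genuine obstacle: bipartiteness essentially hands over the two parity functions, and the rest is bookkeeping. The two points deserving care are the well-definedness of $(\psi,\psi')$ --- precisely where simple connectivity together with the ``two red and two blue edges'' count on $2$-cell boundaries enters --- and the choice of bijection in the last step, which must be arranged so that coordinate toggles correspond to cyclically adjacent labels; once it is, conditions (1) and (2) of the proposition fall out together.
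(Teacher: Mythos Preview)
Your argument is correct, and it takes a genuinely different route from the paper's. The paper works on the finite quotient $P_L$ (which is not simply connected), fixes a labeling of $E_{v_0}$ at a base vertex, and propagates it along edge paths via a ``compatibility'' rule; well-definedness then requires two lemmas---one showing every edge loop in $P_L$ decomposes (up to insertion/deletion of $ee^{-1}$) into conjugates of coordinate $4$-cycles $\tilde e_s\tilde e_t\bar e_s\bar e_t$, and one checking by hand that propagation around any such $4$-cycle returns the original labeling (including the case where all four edges share a colour, which does not bound a $2$-cell in $P_L$). You instead work directly on the simply connected $\Sigma_\Gamma$ and encode the labeling via two global $\mathbb{Z}/2$-valued parity functions, reducing well-definedness to the single observation that each $2$-cell boundary has exactly two red and two blue edges; the simple connectivity of $\Sigma_\Gamma$ does the rest. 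Your approach is shorter and more conceptual, and it sidesteps the combinatorial decomposition lemma entirely; the paper's approach has the minor advantage of living on the finite complex $P_L$ and of making the edge-degree statement (their Lemma~3.7) a bit more immediate from the local compatibility machinery, but your parity description yields that too (red edges receive labels $1$ or $3$, blue edges $2$ or $4$).
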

	Proving this statement is the first step in showing that $\Sigma_\Gamma$ admits a Fuchsian building structure. As $\Sigma_\Gamma$ is the universal cover of $P_L$, it suffices to show that $P_L$ admits the structure of a labeled 2-complex. One can then lift the labeled 2-complex structure on $P_L$ to obtain a labeled 2-complex structure on $\Sigma_\Gamma$. Recall the construction of $P_L$.  A one-to-one  correspondence was fixed between the set of coordinate axes of $\mathbb R^n$ and the vertex set $V(\Gamma)$ of $\Gamma$. The vertices of $\Gamma$ are divided into red  vertices and blue vertices.   A coordinate axis is red (blue) if it corresponds to a red (blue) vertex. An edge $e$ of $P_L$ is red (blue) if it is parallel to a red (blue) coordinate axis.
	
	We begin by defining a labeling of $P_L$ and establishing a few preliminary results. For any vertex $v$ of $P_L$ let $E_v$ be the set of all edges in $P_L$ incident to $v$.

	\begin{definition}\label{PL}
		A {\em labeling} of $P_L$ is a map $l$ from the edge set of $P_L$ to the set $\{1,2,3,4\}$ with the following two properties:
		\begin{enumerate}
			\item For any 2-cell $S$ in $P_L$, the edges of $S$ are cyclically labeled 1,2,3,4.
			\item For any vertex $v$, $l(E_v)=\{i,i+1\}$ for some $i\in\{1,2,3,4\}$.
		\end{enumerate}
	\end{definition}
	
	Given any labeling $l$ of $P_L$ we obtain a labeled 2-complex structure of $P_L$ by labeling each vertex $v$ of $P_L$ with $l(E_v)$. To obtain a labeling of $P_L$ we begin with a labeling of $E_{v_0}$ for a fixed vertex $v_0$ and then use compatibility (see Definition \ref{compatible}) to obtain a labeling of $E_v$ for every vertex $v$.

	\begin{definition}\label{linklabel}
		For a vertex $v\in P_L$ a {\em labeling of $E_v$} is a map $l$ from $E_v$ to the set $\{1,2,3,4\}$ with the following properties:\begin{enumerate}
			\item $l(E_v)=\{i,i+1\}$ for some $i\in\{1,2,3,4\}$.
			\item For all $e_1,e_2\in E_v$, $l(e_1)=l(e_2)$ if and only if $e_1,e_2$ have the same color.
		\end{enumerate}
	\end{definition}
	Labelings on $E_v$ clearly exist as one can define $l(e)=i$ for all red edges in $E_v$ and $l(e)=i+1$ for all blue edges in $E_v$.

	\begin{definition}\label{compatible}
		
		Let $v_1,v_2$ be two adjacent vertices in $P_L$ and $l_1,l_2$ labelings of $E_{v_1},E_{v_2}$ respectively. Let $e$ be the edge with vertices $v_1,v_2$. We say  $l_1$ and $l_2$ are {\em compatible} if there is some $i\in\{1,2,3,4\}$ such that  $l_1(e)=l_2(e)=i$ and either $l_1(E_{v_1})=\{i-1, i\}$, $l_2(E_{v_2})=\{i, i+1\}$ or  $l_2(E_{v_2})=\{i-1, i\}$, $l_1(E_{v_1})=\{i, i+1\}$.
		
	\end{definition}
	Let $v_1,v_2$ be adjacent vertices in $P_L$. Note that given any labeling $l_1$ of $E_{v_1}$ there is exactly one labeling $l_2$ of $E_{v_2}$ that is compatible with $l_1$. Indeed,  let $l_1$ be a labeling of $E_{v_1}$ and $e_0$ the edge  with $v_1$, $v_2$ as vertices. Without loss of generality we may assume $l_1(e)=i$ for red edges and $l_1(e)=i+1$ for blue edges  in $E_{v_1}$.  If $e_0$ is red, then the unique labeling  $l_2$ of $E_{v_2}$ compatible with $l_1$ is given by: $l_2(e)=i$  if $e$ is red and $l_2(e)=i-1$  if $e$ is blue. Similarly if  $e_0$ is blue, then the  unique labeling  $l_2$ of $E_{v_2}$ compatible with $l_1$ is given by: $l_2(e)=i+1$  if $e$ is blue and $l_2(e)=i+2$ if $e$ is red. 
	
	We now outline how to construct a labeling of $P_L$. Fix a vertex $v$ of $P_L$ and a labeling $l$ of $E_v$. As previously stated, given adjacent vertices $v_1,v_2$ and a labeling $l_1$ of $E_{v_1}$ there is a unique labeling $l_2$ of $E_{v_2}$ compatible with $l_1$. Now if $C=e_1e_2\dots e_m$ is an edge path from $v$ to some vertex $v'$, then using compatibility along $C$ we obtain a labeling $l'$ of $E_{v'}$. As there may be many edge paths from $v$ to $v'$, we must show that $l'$ is independent of the chosen path $C$. Equivalently we need to show that if $C$ is an edge loop at $v$, then the labeling $l'$ of $E_v$ obtained from $l$ by compatibility along the loop $C$ coincides with $l$. We show this by first considering the case when $C$ is a $4$-cycle (Lemma \ref{4-cycle}). We then write a general edge loop as the concatenation of conjugates of $4$-cycles (Lemma \ref{circuit}).

	\begin{figure}
		\begin{tikzpicture}[scale=0.45];
		\draw [line width=1pt] (-2,2)-- (-2,-2);
		\draw [line width=1pt] (-2,-2)-- (2,-2);
		\draw [line width=1pt] (2,-2)-- (2,2);
		\draw [line width=1pt] (2,2)-- (-2,2);
		\begin{scriptsize}
		\draw [fill=black] (-2,2) circle (2.5pt);
		\draw[color=black] (-1.78,2.3) node {$v_4$};
		\draw [fill=black] (-2,-2) circle (2.5pt);
		\draw[color=black] (-1.9,-2.3) node {$v$};
		\draw[color=black] (-2.3,0.25) node {$\bar e_2$};
		\draw [fill=black] (2,-2) circle (2.5pt);
		\draw[color=black] (2.1,-2.3) node {$v_2$};
		\draw[color=black] (0.08,-2.3) node {$e_1$};
		\draw [fill=black] (2,2) circle (2.5pt);
		\draw[color=black] (2.22,2.3) node {$v_3$};
		\draw[color=black] (2.4,0.25) node {$e_2$};
		\draw[color=black] (0.08,2.57) node {$\bar e_1$};
		\end{scriptsize}
		\end{tikzpicture}	
		\caption{}\label{cyclepic}
	\end{figure}

	For any oriented edge $e$ of $P_L$ let $e^{-1}$ denote the same edge given with the opposite orientation, $\tilde e$ any edge parallel to $e$ with the same direction, and $\bar e$ any edge parallel to $e$ with the opposite orientation. In particular if $\tilde e,\bar e$ occur consecutively in an edge path of $P_L$, then this $\bar e$ is equal to $\tilde e^{-1}$.

	\begin{lemma}\label{4-cycle}
		Let $C=e_1e_2\bar e_1\bar e_2$ be a $4$-cycle in $P_L$ based at $v$. Then for any labeling $l$ of $E_v$ we have $l'=l$ where $l'$ is the labeling of $E_v$ obtained from $l$ by compatibility along $C$.
	\end{lemma}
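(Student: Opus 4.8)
First I would reduce the statement to bookkeeping with two residues mod $4$. Since $\Gamma$ has vertices of both colours, for every vertex $w$ of $P_L$ the set $E_w$ contains red and blue edges, and by Definition~\ref{linklabel} a labeling of $E_w$ is completely determined by the label $\rho(w)\in\mathbb{Z}/4\mathbb{Z}$ it assigns to the red edges of $E_w$ and the label $\beta(w)\in\mathbb{Z}/4\mathbb{Z}$ it assigns to the blue edges (these two labels being distinct and differing by $\pm 1$ mod $4$). Hence the conclusion $l'=l$ is equivalent to saying that, after transporting $l$ once around $C$ by compatibility, the pair $(\rho,\beta)$ returns to its original value.

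The second step is to record the effect of one compatibility move across an edge, which is exactly what the paragraph following Definition~\ref{compatible} computes: if $w_1,w_2$ are adjacent via an edge $e$ and the labelings of $E_{w_1}$ and $E_{w_2}$ are compatible, then crossing $e$ leaves the label of the colour of $e$ unchanged and changes the label of the opposite colour by $2\pmod 4$. Because $2\equiv -2\pmod 4$ and compatibility is a symmetric relation, this change does not depend on the direction in which $e$ is traversed; in particular the reversed edges $\bar e_1,\bar e_2$ behave exactly like $e_1,e_2$. Consequently, transporting $l$ along any edge path shifts $\rho$ by $2$ times the number of blue edges crossed and shifts $\beta$ by $2$ times the number of red edges crossed, all mod $4$.

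The last step is a parity count for $C=e_1e_2\bar e_1\bar e_2$. Since $\bar e_1$ is parallel to $e_1$ it has the same colour as $e_1$, and $\bar e_2$ has the same colour as $e_2$; therefore the number of red edges among $e_1,e_2,\bar e_1,\bar e_2$ is even (it is $0$, $2$, or $4$) and likewise the number of blue edges is even. By the previous step, going once around $C$ changes $\rho$ by $2\cdot(\text{even})\equiv 0$ and $\beta$ by $2\cdot(\text{even})\equiv 0$ mod $4$, so $(\rho,\beta)$ is unchanged and $l'=l$. I do not expect a genuine obstacle in this lemma; the only point requiring care is the per-move rule and its independence of orientation, after which the statement is a one-line parity observation about the colours of the four edges of $C$.
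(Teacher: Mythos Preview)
Your argument is correct. Encoding a labeling of $E_w$ by the pair $(\rho(w),\beta(w))\in(\mathbb{Z}/4)^2$ and extracting from the paragraph after Definition~\ref{compatible} the single rule ``crossing an edge of colour $c$ fixes the $c$-label and adds $2$ to the other label'' is exactly right, and the parity count on the four edges of $C$ finishes the job.

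The paper's proof reaches the same conclusion by direct bookkeeping: it splits into the two cases (all four edges of $C$ the same colour, or alternating colours) and in each case walks explicitly around the cycle, updating the sets $l_j(E_{v_j})$ step by step until it returns to $v$. The content is identical to yours---both proofs ultimately rest on the compatibility rule in Definition~\ref{compatible}---but you have packaged it more cleanly. By isolating the per-edge effect as a single additive rule on $(\rho,\beta)$, you avoid the case split entirely and make it transparent that what matters is only the parity of the number of edges of each colour in the loop; this would equally well show, for instance, that any edge loop crossing an even number of red edges and an even number of blue edges returns $l$ to itself. The paper's version, by contrast, is more hands-on and perhaps easier to verify line by line, but obscures this underlying reason.
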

	
	\begin{proof}
		Suppose $v$ is the vertex in $C$ incident to both $e_1$ and $\bar e_2$ and let $l$ be a labeling of $E_v$. Denote the remaining vertices in $C$ as in Figure \ref{cyclepic}. We have two cases to consider. 
		
		Case 1: Suppose $e_1,\bar e_2$ are of the same color. As $\bar e_1$ and $e_2$ are parallel to $e_1$ and $\bar e_2$ respectively, all of $e_1,e_2,\bar e_1,\bar e_2$ are of the same color. Thus there is some $i\in\{1,2,3,4\}$ such that $l(e_1)=l(\bar e_2)=i$. Furthermore either $l(E_v)=\{i,i+1\}$ or $l(E_v)=\{i,i-1\}$. Without loss of generality we may assume $l(E_v)=\{i,i+1\}$. Let $l_2$ be the labeling of $E_{v_2}$ obtained from $l$ by compatibility along $e_1$. Then $l_2(E_{v_2})=\{i-1,i\}$ and $l_2(e_1)=l_2(e_2)=i$. Similarly, let $l_3$ be the labeling of $E_{v_3}$ obtained from $l_2$ by compatibility along $e_2$. Then $l_3(E_{v_3})=\{i,i+1\}$ and $l_3(e_2)=l_3(\bar e_1)=i$. Next let $l_4$ be the labeling of $E_{v_4}$ obtained from $l_3$ by compatibility along $\bar e_1$. Then $l_4(E_{v_4})=\{i-1,i\}$ and $l_4(\bar e_1)=l_4(\bar e_2)=i$. Finally, we obtain the labeling $l'$ of $E_v$  from $l_4$ by compatibility along $\bar e_2$. It follows that $l'(E_v)=\{i,i+1\}$ and $l'(\bar e_2)=l'(e_1)=i$. Thus $l'=l$.
		
		Case 2: Suppose $e_1,\bar e_2$ are of different colors. Then there must be some $i\in\{1,2,3,4\}$ such that either $l(e_1)=i$, $l(\bar e_2)=i+1$ or $l(e_1)=i+1,l(\bar e_2)=i$. Without loss of generality, suppose $l(e_1)=i$, $l(\bar e_2)=i+1$. Hence $l(E_v)=\{i,i+1\}$. Let $l_2$ be the labeling of $E_{v_2}$ obtained from $l$ by compatibility along $e_1$. Then $l_2(e_1)=i$ and $l_2(E_{v_2})=\{i-1,i\}$. As $e_1,\bar e_2$ are of different colors, so are $e_1,e_2$. Then $l_2(e_1)\neq l_2(e_2)$ must be the case. It then follows that $l_2(e_2)=i-1$. Now let $l_3$ be the labeling of $E_{v_3}$ obtained from $l_2$ by compatibility along $e_2$. Then $l_3(e_2)=i-1$ and so $l_3(E_{v_3})=\{i-2,i-1\}$ by property 2 of Definition \ref{linklabel}. As before, $e_2,\bar e_1$ will be of different colors, implying that $l_3(\bar e_1)=i-2$. Similarly, let $l_4$ be the labeling of $E_{v_4}$ obtained from $l_3$ by compatibility along $\bar e_1$. Then $l_4(\bar e_1)=i-2$, $l_4(E_{v_4})=\{i-3,i-2\}$, and $l_4(\bar e_2)=i-3$. Finally, let $l'$ be the labeling of $E_v$ obtained from $l_4$ by compatibility along $\bar e_2$. It follows that $l'(\bar e_2)=l_4(\bar e_2)=i-3$, $l'(E_v)=\{i-4,i-3\}$, and $l'(e_1)=i-4$. By our previously stated convention, $i-4=i$ and $i-3=i+1$, hence $l'=l$.
	\end{proof}
	We now define an equivalence relation $\sim$ on the edge paths of $P_L$. We say that two edge paths $C,C'$ satisfy $C\sim C'$ if $C'$ can be obtained from $C$ by inserting and deleting pairs of the form $ee^{-1}$. In this case $C$ and $C'$ have the same initial and terminal vertices. Observe that if $v_0$ is the initial vertex, $l_0$ is a labeling of $E_{v_0}$ and $l$,$l'$ are the labelings at the terminal vertex obtained from $l_0$ by compatibility along $C$ and $C'$ respectively, then $l=l'$.
	\begin{lemma}\label{circuit}
		Let $C=e_1\dots e_{2n}$ be an edge loop in $P_L$. Then there exists an edge loop $C'=f_1\dots f_k$ in $P_L$ with $C'\sim C$ such that each $f_j$ is of the form $\alpha_j(\tilde e_s\tilde e_t\bar e_s\bar e_t)\alpha_j^{-1}$ with $1\leq s<t\leq 2n$ and $\alpha_j$ an edge path of $P_L$.
	\end{lemma}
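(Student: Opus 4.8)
The plan is to exploit the fact that the loop $C$ is contained in a single, contractible face of the cube $[-1,1]^n$ used to build $P_L$, together with the simple connectivity of the $2$-skeleton of a cube, in order to rewrite $C$ --- up to the relation $\sim$ --- as an explicit product of conjugates of boundaries of square faces, and then to recognise each such boundary as a $4$-cycle of the form $\tilde e_s\tilde e_t\bar e_s\bar e_t$.

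First I would observe that every edge $e_i$ of $C$ changes only coordinates lying in the set $D\subseteq\{1,\dots,n\}$ of coordinate directions occurring among $e_1,\dots,e_{2n}$; since $C$ is a loop, all of its vertices therefore agree in every coordinate outside $D$, so $C$ lies in the $1$-skeleton of the face $F$ of $[-1,1]^n$ obtained by fixing each coordinate outside $D$ equal to its value at the base vertex of $C$. The full $1$-skeleton of the cube lies in $P_L$ (each vertex and each edge of the cube corresponds to a clique of size $0$ or $1$ in $\Gamma$), so $F^{(1)}\subseteq P_L$ and every edge path produced inside $F$ is automatically an edge path of $P_L$; importantly, we will not need the square faces of $F$ to lie in $P_L$ --- only their boundary $4$-cycles, which live in $F^{(1)}$.

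Since $F$ is a cube it is contractible, so $F^{(2)}$ is simply connected and the edge loop $C$ is null-homotopic in $F^{(2)}$. By the standard combinatorial description of the fundamental group of a $2$-complex, this forces $C$ to be $\sim$-equivalent to a concatenation $\prod_{j=1}^{k}\alpha_j\,(\partial S_j)^{\epsilon_j}\,\alpha_j^{-1}$, in which each $S_j$ is a square $2$-cell of $F$, $\partial S_j$ is its boundary $4$-cycle, $\epsilon_j=\pm1$, and each $\alpha_j$ is an edge path in $F^{(1)}\subseteq P_L$ from the base vertex of $C$ to the corner of $S_j$ at which we begin to read $\partial S_j$. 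Each $\partial S_j$ uses exactly two distinct coordinate directions, both occurring among $e_1,\dots,e_{2n}$; and since a closed edge path in a cube graph traverses each of its directions in both orientations, we can select indices $s$ and $t$ so that $e_s$ and $e_t$ lie in these two directions with orientations matching the first two edges of $(\partial S_j)^{\epsilon_j}$, and, after possibly interchanging the two directions (equivalently, changing where we start reading $\partial S_j$), arrange $1\le s<t\le 2n$. Then $(\partial S_j)^{\epsilon_j}$, read from the chosen corner, is exactly $\tilde e_s\tilde e_t\bar e_s\bar e_t$, any residual cyclic rotation being absorbed into $\alpha_j$. Putting $f_j=\alpha_j(\tilde e_s\tilde e_t\bar e_s\bar e_t)\alpha_j^{-1}$ and $C'=f_1\cdots f_k$ finishes the argument.

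The conceptual content is just that the $2$-skeleton of a cube is simply connected, which is immediate. I expect the only delicate point to be the bookkeeping in the final step: matching the orientations of the two edges of each square face with those of suitable edges $e_s,e_t$ of $C$ --- which is exactly where one uses that a closed edge path traverses each direction it meets in both orientations --- and handling cyclic rotations of the $4$-cycles together with the ordering $s<t$ by an appropriate choice of the conjugating paths $\alpha_j$.
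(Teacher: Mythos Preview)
Your argument is correct and takes a genuinely different route from the paper's. The paper proceeds by a bare-hands induction on the length $2n$: after deleting backtracks, it locates an index $j$ with $e_j=\bar e_1$ (such $j$ exists since a closed path in the cube graph has zero net displacement in each coordinate) and then ``commutes'' a copy of $e_1$ past $e_2,\dots,e_{j-1}$ one step at a time, each commutation producing a conjugate of a $4$-cycle $\tilde e_1 e_i\bar e_1\bar e_i$; when $\tilde e_1$ reaches $e_j$ they cancel, leaving a strictly shorter loop to which induction applies. No $2$-cells or homotopy are ever mentioned. Your approach replaces this explicit manipulation by a single topological fact: since $F^{(2)}$ is simply connected, $C$ lies in the normal closure in $\pi_1(F^{(1)})$ of the square boundaries, which is precisely $\sim$-equivalence to a product of conjugates of $4$-cycles. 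The gain is conceptual clarity---one sees that the lemma is exactly simple connectivity of a cube---at the price of invoking the combinatorial presentation of $\pi_1$ of a $2$-complex; the paper's version is more elementary and yields an explicit algorithm. Both proofs rest on the same observation that the entire $1$-skeleton of $[-1,1]^n$ lies in $P_L$, so all the auxiliary edge paths and $4$-cycles exist, and your remark that the square $2$-cells themselves need not belong to $P_L$ is exactly right. The bookkeeping you flag at the end (arranging $s<t$) goes through: take $s$ minimal among all indices whose edge lies in one of the two relevant directions, and $t$ any index in the other direction.
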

	\begin{proof}
		By deleting pairs of the form $ee^{-1}$, we may assume 
		$e_{i+1}\not=e_i^{-1}$ for any $i$. We induct on  the length $2n$ of the edge loop $C$. Taking $C$ to have length $4$ as our base case, $C=e_1e_2\bar e_1\bar e_2$ already has the required form. Suppose the statement holds for  edge loops with length $<2n$. Now assume 
		the edge loop $C$ has length $2n$.
		
		As $C$ is an edge loop in $P_L$ there exists $j$ such that $e_j=\bar e_1$. We then have that \begin{align*}
		C&=e_1e_2e_3\cdots e_j\cdots e_{2n}\\
		&\sim (e_1e_2\bar e_1\bar e_2)\bar e_2^{-1}\tilde e_1e_3\cdots e_j\cdots e_{2n},\hspace{1.9cm}\text{with}\ \tilde e_1=\bar e_1^{-1},\\
		&=f_1\bar e_2^{-1}\tilde e_1e_3\cdots e_j\cdots e_{2n}, \hspace{3cm}\text{with}\ f_1=e_1e_2\bar e_1\bar e_2,\\
		&\sim f_1\bar e_2^{-1} (\tilde e_1e_3\bar e_1\bar e_3)\bar e_2\bar e_2^{-1}\bar e_3^{-1}\tilde e_1e_4\cdots e_j\cdots e_{2n}\\
		&=f_1f_2\bar e_2^{-1}\bar e_3^{-1}\tilde e_1e_4\cdots e_j\cdots e_{2n}, \hspace{2.1cm}\text{with}\ f_2=\bar e_2^{-1} (\tilde e_1e_3\bar e_1\bar e_3)\bar e_2,\\
		&\cdots\\
		&\sim f_1f_2\cdots f_{j-2}\bar e_2^{-1}\cdots\bar e_{j-1}^{-1}\tilde e_1e_j\cdots e_{2n}\\
		&\sim f_1f_2\cdots f_{j-2}\bar e_2^{-1}\cdots\bar e_{j-1}^{-1}e_{j+1}\cdots e_{2n}, \hspace{1.4cm}\text{since}\ e_j=\bar e_1.
		\end{align*} 
		The length of the edge loop $\bar e_2^{-1}\cdots\bar e_{j-1}^{-1}e_{j+1}\cdots e_{2n}$ is less than $2n$. Thus it follows inductively that there exist $f_{j-1},\dots,f_k$, each of the desired form, such that $C\sim f_1\cdots f_jf_{j+1}\cdots f_k$.
	\end{proof}
	
	We now prove Proposition \ref{prop}.
	
	\begin{proof}[Proof of Proposition \ref{prop}]
		As already observed, it suffices to construct a labeling of $P_L$ as this induces a labeled 2-complex structure on $P_L$ which then lifts to a labeled 2-complex structure on $\Sigma_\Gamma$. Fix a vertex $v_0$ of $P_L$ and a labeling $l_0$ of $E_{v_0}$. By Lemmas \ref{4-cycle} and \ref{circuit}, we obtain a labeling $l_v$ of $E_v$ for every vertex $v$ of $P_L$ such that $l_{v_1},l_{v_2}$ are compatible whenever $v_1,v_2$ are adjacent. We define a labeling $l$ of $P_L$ as follows. For every edge $e$ of $P_L$ define $l(e)=l_{v_1}(e)$ where $v_1$ is a vertex of $e$. Note that if $v_2$ is the other vertex of $e$, then by compatibility we have $l_{v_2}(e)=l_{v_1}(e)$. Thus $l$ is well-defined. Property 2 in Definition \ref{PL} is clearly satisfied. We show that property 1 holds as well.
		
		Let $S$ be a 2-cell in $P_L$. Denote its vertices by $v_1,v_2,v_3,v_4$ and edges by $e_1,e_2,e_3,e_4$ such that the edge $e_i$ is incident to both $v_i$ and $v_{i+1}$. Let $l_{v_i}$ be obtained labelings of $E_{v_i},$ $ i=1,2,3,4$. First note that each pair of adjacent edges $e_i,e_{i+1}$ must have different colors. We proceed with an argument similar to that of Case 2 in the proof of Lemma \ref{4-cycle}. Given that the labelings obtained via compatibility are independent of chosen edge path, $l_{v_{i+1}}$ can be obtained from $l_{v_i}$ by compatibility along $e_i$, $i=1,2,3,4$. Without loss of generality we may assume $l_1(E_{v_1})=\{i,i+1\}$ and $l_1(e_1)=i$. By compatibility along $e_1$ it follows that $l_2(E_{v_2})=\{i-1,i\}$, $l_2(e_1)=i$, and $l_2(e_2)=i-1$. By compatibility along $e_2$ we obtain $l_3(E_{v_3})=\{i-2,i-1\}$, $l_3(e_2)=i-1$, and $l_3(e_3)=i-2$. Finally, $l_4(E_{v_4})=\{i-3,i-2\}$, $l_4(e_3)=i-2$, and $l_4(e_4)=i-3$. Thus, the boundary of $S$ is labeled cyclically as $1,2,3,4$.
	\end{proof}

	For any edge $e$ of $P_L$, the degree $d(e)$  of $e$ is defined to be the  number of squares in $P_L$ that contain $e$. We next show that edges with the same label have the same degree (Lemma 3.7 (3)). This result is needed to show that $\Sigma_\Gamma$ admits a Fuchsian building structure.

	\begin{lemma}\label{degree} Let $l$ be a labeling of $P_L$ and $e, e'$ be two edges of $P_L$.\begin{enumerate}
			\item If $l(e), l(e')$ have the same parity, then  $e, e'$ have the same color;
			\item If  $l(e), l(e')$ have different  parity, then  $e, e'$ have different  colors;
			\item   If   $l(e), l(e')$ have the same parity, then  $d(e)=d(e')$. In particular,
			edges with the same label have the same degree.
		\end{enumerate}
	\end{lemma}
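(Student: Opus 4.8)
The plan is to deduce all three parts from two facts: that the parity of $l(e)$ is determined by (and determines) the color of $e$, and that the number of squares of $P_L$ containing $e$ depends only on the vertex of $\Gamma$ to which $e$ is parallel.

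For parts (1) and (2) I would show that there is a bijection $\epsilon$ from the set $\{\mathrm{red},\mathrm{blue}\}$ of colors to the set $\{0,1\}$ of parities such that $l(e)$ has parity $\epsilon(\mathrm{color}(e))$ for every edge $e$ of $P_L$. Fix the vertex $v_0$ and the labeling $l_0$ of $E_{v_0}$ from which $l$ was built in the proof of Proposition \ref{prop}, and recall that $l(e)=l_v(e)$ for either endpoint $v$ of $e$. At $v_0$ all red edges of $E_{v_0}$ carry one label and all blue edges the other; since $l_0(E_{v_0})=\{i,i+1\}$ for some $i$ and two consecutive residues modulo $4$ have opposite parity, these two labels have opposite parities, which I name $\epsilon(\mathrm{red})$ and $\epsilon(\mathrm{blue})$. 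I then track parities along an edge path from $v_0$ to an arbitrary vertex $v$, using the explicit description of compatibility recalled after Definition \ref{compatible}: crossing an edge leaves the common label of edges of its own color unchanged and sends the common label of the other color to the other of the two residues adjacent modulo $4$ to that unchanged label, so the label of each color changes by $0$ or by $\pm 2$ modulo $4$ and its parity is preserved along the path. Hence at $v$ the red edges have label parity $\epsilon(\mathrm{red})$ and the blue edges label parity $\epsilon(\mathrm{blue})$. Since $\epsilon(\mathrm{red})\neq\epsilon(\mathrm{blue})$, two edges have labels of the same parity precisely when they have the same color --- this is (1) --- and of different parity precisely when they have different colors --- this is (2).

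For part (3), by (1) it suffices to treat two edges $e,e'$ of the same color, say red. By the construction of $P_L$, each edge is parallel to exactly one coordinate axis and hence is associated with a vertex of $\Gamma$, a red edge with a red vertex; write $u$ and $u'$ for the red vertices associated with $e$ and $e'$. I would then identify $d(e)$ with the valence of $u$ in $\Gamma$: a square of $P_L$ containing $e$ is a $2$-face of the cube $C$ whose set of free coordinates is $\{u,w\}$ for some $w$ (since $u$ is free in $e$), whose remaining fixed coordinates coincide with those of $e$ (so $w$ is the only free choice), and which lies in $P_L$ exactly when $\{u,w\}$ spans an edge of $\Gamma$; thus these squares correspond bijectively to the neighbors of $u$ in $\Gamma$, giving $d(e)=\deg_\Gamma(u)$. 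Since $\Gamma$ is the graph of a generalized $m$-gon, any two vertices of the same type have equal valence --- each red vertex lies in the same number of chambers, as recalled in Section 2.2 --- so $d(e)=\deg_\Gamma(u)=\deg_\Gamma(u')=d(e')$. The final assertion of the lemma follows since edges with the same label have labels of the same parity.

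The only genuine bookkeeping is in part (1): verifying that in crossing a single edge the label of each color moves by an even amount modulo $4$, so that parities survive passage along an edge path. Parts (2) and (3) are then immediate, with (3) resting on the combinatorial description of the squares of $P_L$ through an edge together with the equality of valences among same-type vertices of a generalized polygon.
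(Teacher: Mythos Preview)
Your proof is correct and follows essentially the same approach as the paper. For parts (1) and (2), both you and the paper propagate parity information along edge paths; the paper packages this as an induction on the minimal path length between $e$ and $e'$, while you instead track from the base vertex $v_0$ used in the construction of $l$, but the core observation---that compatibility shifts each color's label by $0$ or $\pm 2$ modulo $4$---is the same. For part (3), your argument is identical to the paper's, though you give a more explicit justification of why $d(e)$ equals the valence of the corresponding vertex of $\Gamma$, which the paper simply asserts.
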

	
	\begin{proof} (1) and (2):  We prove  (1)  and (2) at the same time by inducting
		on  the minimal length $m$ of edge paths from $e$ to  $e'$. In the case where $m=0$, $e$ and $e'$ share a vertex.  If $l(e), l(e')$ have the same parity,   then $l(e)=l(e')$  and $e, e'$ are of the same color. If
		$l(e), l(e')$ have different  parity,  then there is some $i$ such that $l(e)=i$, $l(e')=i+1$ or  $l(e)=i+1$, $l(e')=i$  and so  $e, e'$ are of different colors. Assume (1) and (2) hold for $m$. Now suppose the minimal   length of an edge path from $e$ to $e'$ is $m+1$. Let $e e_1\cdots e_{m+1} e'$ be such an edge path.  First suppose $l(e), l(e')$ have the same parity.  If $l(e_1)$ has the same parity as $l(e)$, then $l(e_1),l(e')$ have the same parity. By the inductive assumption $e_1,e'$ are of the same color, $e,e_1$ are of the same color, and so $e,e'$ are of the same color. If $l(e_1)$, $l(e)$  have different parity, then $l(e_1),l(e')$ have different parity. By the inductive assumption $e_1,e'$ are of different colors, $e,e_1$ are of different colors, and so $e,e'$ are of the same color.  Next suppose  $l(e), l(e')$ have different  parity. If $l(e),l(e_1)$ have the same parity, then $l(e_1),l(e')$ have different parity. Then by the inductive assumption $e_1,e'$ are of different colors, $e,e_1$ are of the same color, and so $e,e'$ are of different colors. If $l(e),l(e_1)$ have different parity, then $l(e_1),l(e')$ have the same parity. Again by the inductive assumption $e_1,e'$ are of the same color, $e,e_1$ are of different color, and so $e,e'$ are of different colors.
		
		(3):  Suppose  $l(e), l(e')$ have the same parity.   By (1)
		$e$, $e'$ have the same color.   Let $u, u'$ be vertices of $\Gamma$ that correspond to $e$ and $e'$ respectively.
		Then $u, u'$ have the same color. Also observe
		$d(e)=d(u)$ (valence of $u$) and $d(e')=d(u')$.   Since vertices of the same color in $\Gamma$ have the same valence, we have $d(u)=d(u')$. It follows that $d(e)=d(e')$.
	\end{proof}
\end{section}

\begin{section}{Proof of main results}
	In this section we prove Theorem 1.1 and Corollary 1.3. Throughout, let $\mathcal S_m,m\geq 3$, be the collection of finite generalized thick $m$-gons. Let $\mathcal S=\cup_m \mathcal S_m$ and $\Gamma\in\mathcal S$.  As indicated in the introduction,  to
	prove Theorem 1.1, we first show that $\Sigma_\Gamma$ admits a 
	Fuchsian building structure and then apply the quasi-isometric rigidity theorem for Fuchsian buildings (Theorem 2.4).  In section 3 we already showed that $\Sigma_\Gamma$ admits the structure of a labeled 2-complex and that all edges with the same label have the same degree.  To show that $\Sigma_\Gamma$ admits a 
	Fuchsian building structure, it remains to put a piecewise hyperbolic metric on $\Sigma_\Gamma$ so that it satisfies conditions (1) and (2) in the definition of a Fuchsian building. 
	
	We equip the Davis complex $\Sigma_\Gamma$ with a piecewise hyperbolic metric by replacing each Euclidean square with a regular four sided polygon in $\mathbb{H}^2$ that has angle $\pi/m$ at each vertex. Observe that every vertex link is $\Gamma$ where each edge has length $\pi/m$ and so is CAT($1$). By Gromov's link condition, $\Sigma_\Gamma$ with this piecewise hyperbolic metric is a CAT($-1$) space.

	To verify condition (1) of a Fuchsian building, we let $P$ and $Q$ be two squares in $\Sigma_\Gamma$ and let $p,q$ be interior points of $P$ and $Q$ respectively. Then there exists a unique geodesic path $\sigma$ in $\Sigma_\Gamma$ connecting $p$ and $q$. By perturbing $p$ we may assume $\sigma$ does not pass through any vertices of $\Sigma_\Gamma$. Let $D(p,q)$ denote the union of all squares $S$ in $\Sigma_\Gamma$ such that $\sigma$ passes through the interior of $S$. Observe that $D(p,q)$ is topologically a disk. We  construct larger and larger disks containing $D(p, q)$. This is done in Lemmas 4.1-4.4.   The union of these disks will be a copy of $\mathbb H^2$ (an apartment) containing $P$ and $Q$,  verifying condition (1).  
	
	For the next few statements, we focus attention on the boundary of $D(p,q)$, which we will denote by $\partial D(p,q)$. For a finite subcomplex $D$ that is topologically a disk, a vertex $v$ on the boundary $\partial D$ is {\em convex, flat,} or {\em concave} if the angle at $v$ in $D$ is less than $\pi$, equal to $\pi,$ or greater than $\pi$ respectively. The following lemma says that the local structures of $D(p,q)$ at concave vertices are all the same.
	
	\begin{lemma}\label{m+1}
		Let $v$ be a concave vertex in $\partial D(p,q)$. Then $D(p,q)$ contains exactly $m+1$ squares incident to $v$.
	\end{lemma}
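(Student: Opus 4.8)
The plan is to pass to the piecewise-hyperbolic metric on $\Sigma_\Gamma$ and to estimate the number $k$ of squares of $D(p,q)$ incident to $v$ from two sides, showing both bounds equal $m+1$. First I would record the easy direction. Since $D(p,q)$ is an embedded disk and $v$ lies on its boundary, the squares of $D(p,q)$ that contain $v$ form a fan $S_{i},\dots,S_{j}$, consecutive around $v$, with successive squares sharing an edge at $v$; set $k=j-i+1$. In the piecewise-hyperbolic metric each square is a regular hyperbolic $4$-gon with angle $\pi/m$ at every vertex, so the interior angle of $D(p,q)$ at $v$ is exactly $k\pi/m$. The hypothesis that $v$ is concave says this angle exceeds $\pi$, hence $k\pi/m>\pi$ and $k\ge m+1$; embeddedness gives $k\pi/m<2\pi$, so $k<2m$, and in particular the fan develops into $\mathbb H^2$ without overlap.

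For the opposite inequality I would develop the fan isometrically into $\mathbb H^2$. First one checks that $\sigma$ meets the closed star of $v$ in a single subarc and runs through $S_i,S_{i+1},\dots,S_j$ in that order, so that it crosses, in this order, each of the $k-1$ interior edges $S_t\cap S_{t+1}$ of the fan; this uses that $\sigma$ is a local geodesic in the CAT$(-1)$ complex $\Sigma_\Gamma$, that $\sigma$ avoids $v$, and that the link of $v$ in $\Sigma_\Gamma$ is $\Gamma$, which has girth $2m\ge 6$ and hence admits no short detours. Under the development the fan becomes $k$ regular quadrilaterals fanned consecutively about a point $\tilde v$, the interior edges becoming geodesic segments issuing from $\tilde v$ at the successive angles $\pi/m,2\pi/m,\dots,(k-1)\pi/m$, and $\sigma$ becomes an honest geodesic segment $\tilde\sigma$ of $\mathbb H^2$ that meets all $k-1$ of these segments, in order, while missing $\tilde v$.

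Now comes the point of the proof. As a variable point runs monotonically along the geodesic segment $\tilde\sigma$, the direction from $\tilde v$ to it turns monotonically and sweeps out an angle strictly less than $\pi$: a geodesic segment subtends less than $\pi$ at any point not on the complete geodesic line carrying it, and $\tilde v$ cannot lie on that line, for otherwise the $k-1\ge m\ge 3$ crossing points would present at most two distinct directions at $\tilde v$. Since those $k-1$ crossing points occur along $\tilde\sigma$ in order and are seen from $\tilde v$ under the successive directions $\pi/m,\dots,(k-1)\pi/m$, the angle swept between the first and last of them is exactly $(k-2)\pi/m$; therefore $(k-2)\pi/m<\pi$, i.e. $k\le m+1$. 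Combining the two bounds gives $k=m+1$. The step I expect to require the most care is the one just flagged — verifying rigorously that $\sigma$ traverses the fan as a single connected sub-corridor, crossing each interior edge once and in the fan order, so that the clean planar estimate applies; the trigonometric estimate itself is elementary, and it is precisely that estimate which forces the count down from ``$\ge m+1$'' to ``$=m+1$''.
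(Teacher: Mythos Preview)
Your argument is correct and rests on the same geometric fact as the paper's: a geodesic missing $v$ subtends angle strictly less than $\pi$ there. The paper packages this more directly by staying inside $\Sigma_\Gamma$ rather than developing: assuming $k\ge m+2$, it takes $v_1=\sigma\cap(S_1\cap S_2)$ and $v_2=\sigma\cap(S_{m+1}\cap S_{m+2})$, observes that the concatenation $v_1v\cup vv_2$ has link-angle exactly $\pi$ at $v$ (using that a path of length $m$ in the generalized $m$-gon $\Gamma$ is a geodesic), hence is itself a geodesic in the CAT$(-1)$ space $\Sigma_\Gamma$, and then uniqueness of geodesics forces $v\in\sigma$, a contradiction. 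Your development into $\mathbb H^2$ trades this link computation for a planar angle estimate, which is fine; but your claim that ``embeddedness gives $k\pi/m<2\pi$'' is not justified as stated --- the link $\Gamma$ is a thick generalized $m$-gon and can contain embedded paths of length well beyond $2m$, so there is no a priori bound $k<2m$. This is harmless for your argument, since to reach a contradiction you only need to develop the sub-fan $S_1,\dots,S_{m+2}$, whose total angle $(m+2)\pi/m$ is below $2\pi$ for every $m\ge3$; with that adjustment your estimate $(k-2)\pi/m<\pi$ (applied with $k=m+2$) goes through exactly as you wrote.
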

	\begin{proof}
		Let $v$ be concave in $\partial D(p,q)$ and suppose $D(p,q)$ contains $k$ consecutive squares incident to $v$ with $k\geq m+2$. Denote them by $S_1,\dots, S_k$ going counterclockwise. Let $v_1$ be the intersection of $\sigma$ with the common edge of $S_1,S_2$ and let $v_2$ be the intersection of $\sigma$ with the common edge of $S_{m+1},S_{m+2}$. The two points $v_1,v_2$ are not vertices of $\Sigma_\Gamma$ given that $\sigma$ does not pass through any vertices of $\Sigma_\Gamma$. In particular, they must be distinct from $v$. Then the union $v_1v\cup vv_2$ is a geodesic in $\Sigma_\Gamma$ (see Figure \ref{concave fig} for the case when $m=3$). Since $v_1,v_2\in\sigma$, the unique geodesic property of CAT($-1$) spaces implies that $v_1v\cup vv_2\subset\sigma$, contradicting the fact that $\sigma$ does not pass through any vertex.
	\end{proof}
	
	\begin{lemma}\label{flats}
		Between any two concave vertices on $\partial D(p,q)$ there is at least one convex vertex.
	\end{lemma}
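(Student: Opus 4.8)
The plan is to argue by contradiction, adapting the uniqueness-of-geodesics device from the proof of Lemma~\ref{m+1}. Suppose the conclusion fails. Then some arc of $\partial D(p,q)$ has both endpoints concave and no convex vertex in its interior; after replacing it by a shorter such arc we may also assume no interior vertex is concave, so all interior vertices are flat. Write this arc as $\gamma=f_1f_2\cdots f_\ell$, with $f_1$ incident to the concave vertex $v$, $f_\ell$ incident to the concave vertex $w$, and let $u_t$ be the common vertex of $f_t$ and $f_{t+1}$. The idea is to prolong $\gamma$ a short way past $v$ and past $w$ into a path $\Lambda$ that is locally geodesic in $\Sigma_\Gamma$ and whose two endpoints lie on $\sigma$; since $\Sigma_\Gamma$ is CAT($-1$), $\Lambda$ must then be a subsegment of $\sigma$, yet $\Lambda$ runs through the vertex $v$, contradicting the fact (set up before Lemma~\ref{m+1}) that $\sigma$ avoids every vertex.

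First I would check that $\gamma$ is locally geodesic at each $u_t$. Since $u_t$ is flat, the interior angle of $D(p,q)$ there is $\pi$, so $f_t$ and $f_{t+1}$ span an angle $\pi$ on the $D(p,q)$-side of $u_t$; on the other side the angle is at least $\pi$, because the total angle of $\Sigma_\Gamma$ at any vertex equals $|E(\Gamma)|\cdot(\pi/m)\ge 2\pi$ (each vertex link is $\Gamma$, and a generalized $m$-gon contains a circuit of length $2m$, so $|E(\Gamma)|\ge 2m$). Thus $\gamma$ is a local geodesic away from its endpoints.

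Next comes the extension past $v$. By Lemma~\ref{m+1}, $D(p,q)$ contains exactly $m+1$ squares at $v$; order them $S_1,\dots,S_{m+1}$ so that $\sigma$ crosses them consecutively, as in the proof of Lemma~\ref{m+1}, and after relabeling assume that $f_1$ is the edge of $S_1$ at $v$ other than $S_1\cap S_2$. Let $a=S_m\cap S_{m+1}$. Then $\sigma$ crosses $a$, at a point $p_v$ interior to $a$, and I take $\rho_v$ to be the subsegment of the edge $a$ from $v$ to $p_v$. Going around $v$ from $f_1$ across the fan $S_1,\dots,S_m$ to $a$ sweeps angle $m\cdot(\pi/m)=\pi$, so $f_1$ and $\rho_v$ span angle $\pi$ on that side of $v$ and at least $\pi$ on the other side; hence $f_1\cup\rho_v$ is locally geodesic at $v$. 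The symmetric construction at $w$ produces $\rho_w$ from $w$ to a point $p_w$ interior to an edge crossed by $\sigma$, with $f_\ell\cup\rho_w$ locally geodesic at $w$. Then $\Lambda:=\rho_v\cup f_1\cup\cdots\cup f_\ell\cup\rho_w$ is a local geodesic of $\Sigma_\Gamma$; as $\Sigma_\Gamma$ is CAT($-1$) it is the unique geodesic joining $p_v$ to $p_w$, and since $p_v,p_w\in\sigma$ it coincides with the subsegment of $\sigma$ between them. But $v\in\Lambda$, so $\sigma$ passes through the vertex $v$ --- a contradiction. (Nothing changes when $\ell=1$, i.e.\ when $v$ and $w$ are adjacent.)

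The main obstacle is the bookkeeping at the concave vertex $v$: using Lemma~\ref{m+1} to know there are exactly $m+1$ squares there, ordering them compatibly with $\sigma$, and identifying $a$ as precisely the edge at angular distance $\pi$ from $f_1$ inside $D(p,q)$ --- this is what simultaneously forces $\sigma$ to cross $a$ (so $\Lambda$ has an endpoint on $\sigma$) and makes $f_1\cup\rho_v$ turn by exactly $\pi$ at $v$, hence locally geodesic. After that, the CAT($-1$) argument is the same as in Lemma~\ref{m+1}.
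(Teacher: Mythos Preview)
Your argument is essentially the paper's: reduce to a boundary arc with concave endpoints and only flat interior vertices, extend past each concave endpoint along a suitable edge of the fan to reach $\sigma$, and use uniqueness of geodesics in CAT($-1$) to force $\sigma$ through a vertex. The paper treats the adjacent case separately as a warm-up before asserting the general flat case is similar; you handle both uniformly, and your extension edge $a=S_m\cap S_{m+1}$ is the paper's $S_1\cap S_2$ after reversing the labeling of the fan.

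One refinement: your ``two sides, total angle $|E(\Gamma)|\cdot(\pi/m)\ge 2\pi$'' justification for local geodesicity at $u_t$ (and at $v$, $w$) is not literally valid, since $\Sigma_\Gamma$ is not a surface and the vertex link is the graph $\Gamma$, not a circle---there is no meaningful ``total angle'' or ``other side.'' What you actually need is that the link $\Gamma$ (each edge given length $\pi/m$) has girth $2\pi$, so the length-$\pi$ path through the fan is a shortest path in the link; hence the link-distance between the two edge-directions is exactly $\pi$, which is the local-geodesic condition. The paper simply asserts the concatenated path is a geodesic without spelling this out either, so this is a point both treatments gloss over rather than a gap peculiar to yours.
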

	\begin{proof}
		Let $v,v'$ be two concave vertices in $\partial D(p,q)$ such that one of the two components of $\partial D(p, q)\backslash\{v, v'\}$ contains only convex and flat vertices. First suppose $v$ and $v'$ are adjacent. By Lemma \ref{m+1} we can find exactly $m+1$ squares in $D(p,q)$ incident to $v$ and $m+1$ squares in $D(p,q)$ incident to $v'$. Denote them by $S_1,\dots, S_{m+1}$ and $T_1,\dots, T_{m+1}$ respectively going counterclockwise such that $S_{m+1}=T_1$ (see Figure \ref{convbtw} for the case where $m=3$). We make use of the argument from the proof of Lemma \ref{m+1}. Let $e$ be the common edge of $S_1,S_2$ and $e'$ the common edge of $T_m,T_{m+1}$. Let $v_1,v_2$ be the intersection of $\sigma$ with the edges $e,e'$ respectively. As before, $v_1,v_2$ are not vertices in $\Sigma_\Gamma$ and the path $v_1v\cup vv'\cup v'v_2$  is a geodesic in $\Sigma_\Gamma$. By the unique geodesic property of CAT($-1$) spaces it follows that $v_1v\cup vv'\cup v'v_2\subset\sigma$. This contradicts the assumption that $\sigma$ does not pass through any vertices of $\Sigma_\Gamma$. Therefore, there must be vertices $v_1,\dots,v_k$ in $\partial D(p,q)$ that lie between $v$ and $v'$, each of which must be either convex or flat. If we suppose that each of the $v_i$ is flat (again see Figure \ref{convbtw} for the case where $m=3$), then a similar argument shows that $\sigma$ must pass through a vertex of $\Sigma_\Gamma$. Hence, there must be some $i$ such that $v_i$ is convex. 
	\end{proof}

	\begin{figure}
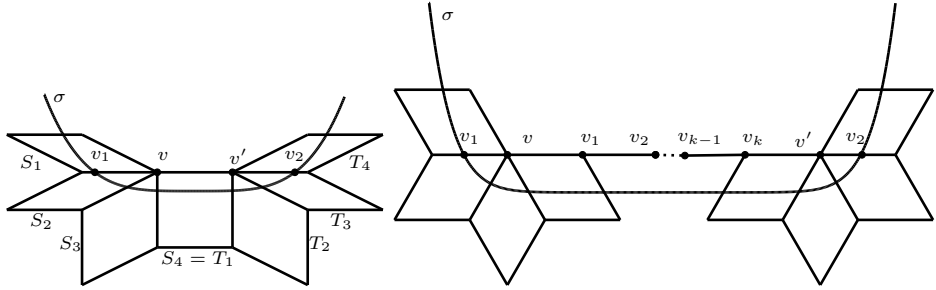

		% [inline block 0: 3 envs, 137694 chars -> data_tex | \begin{tikzpicture}[scale=0.5] 		...]


		\caption{The two images illustrate the cases described in the proof of Lemma \ref{flats}. The image on the left shows why two adjacent vertices cannot be concave. The image on the right shows the case where all of the vertices between $v$ and $v'$ are flat. }\label{convbtw}
	\end{figure}

A finite subcomplex $D$ of $\Sigma_\Gamma$ that is also a topological disk is called a {\it{convex disk}}  if\begin{enumerate}
\item every vertex on $\partial D$ is either flat or convex;  and
\item every vertex $v$ in the interior of $D$ is flat, which means there are exactly $2m$ squares in $D$  that are incident to $v$.
\end{enumerate}

Note that for any subcomplex $D$ of $\Sigma_\Gamma$ that is also a disk, there are at least 3 convex vertices on the boundary. Indeed, letting $\alpha_1,\dots,\alpha_m$ be the angles in $D$ at convex vertices (so $0<\alpha_i<\pi$) and $\beta_1,\dots,\beta_n$ the angles in $D$ at concave vertices (so $\beta_j>\pi$), it follows from Gauss-Bonnett that \[\sum_i(\pi-\alpha_i)+\sum_j(\pi-\beta_j)-A\ge 2\pi,\] where $A$ is the area of the disk $D$. This implies that $\sum_i(\pi-\alpha_i)=2\pi+A+\sum_j(\beta_j-\pi)>2\pi$ and so $m\geq 3$.

This fact is necessary in establishing the next two lemmas. In particular, we have that $\partial D(p,q)$ must contain at least three convex vertices. We next show that the disk $D(p, q)$ is contained in a convex disk. 

\begin{lemma}\label{disk extension}
There exists a convex disk $D'$ containing $D(p,q)$.
\end{lemma}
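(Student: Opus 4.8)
The plan is to obtain $D'$ from $D(p,q)$ by repeatedly enlarging across concave boundary vertices, maintaining throughout the invariant that the current complex is an embedded disk all of whose interior vertices are flat. We start with $D_0=D(p,q)$: running the argument of Lemma~\ref{m+1} shows that $\sigma$ meets strictly fewer than $2m$ of the squares around any given vertex, so $D(p,q)$ in fact has no interior vertices and the invariant holds vacuously.

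The basic move is as follows. Suppose $D=D_k$ has a concave boundary vertex $v$. By Lemma~\ref{m+1}, $D$ contains exactly $m+1$ squares $S_1,\dots,S_{m+1}$ incident to $v$; let $e_0,e_{m+1}$ be the two edges of $\partial D$ at $v$ (the free edges of $S_1$ and $S_{m+1}$). Since the link of $v$ in $\Sigma_\Gamma$ is $\Gamma$, with squares at $v$ corresponding to edges of $\Gamma$ and edges at $v$ to vertices of $\Gamma$, the squares $S_1,\dots,S_{m+1}$ correspond to a path $P$ of length $m+1$ in $\Gamma$. Because $\Gamma$ has girth $2m$ and diameter $m$, every reduced path of length $\le m$ in $\Gamma$ is a geodesic; combining this with a parity count one finds that the two endpoints of $P$ lie at distance exactly $m-1$, and that a geodesic $Q$ of length $m-1$ joining them is internally disjoint from $P$, so $P\cup Q$ is a circuit of combinatorial length $2m$, i.e.\ an apartment of the generalized $m$-gon $\Gamma$. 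The $m-1$ edges of $Q$ give $m-1$ squares $T_1,\dots,T_{m-1}$ of $\Sigma_\Gamma$ incident to $v$, forming a fan attached to $D$ along $e_0\cup\{v\}\cup e_{m+1}$ and lying, since $P$ already fills the reflex angle at $v$, on the side of $\partial D$ away from $D$. Put $D_{k+1}:=D\cup T_1\cup\dots\cup T_{m-1}$.

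One then checks three things. First, $D_{k+1}$ is still an embedded disk: the $T_j$ are pairwise distinct and distinct from every square of $D$ (they correspond to edges of $\Gamma$ not used at $v$), and the internal disjointness of $P$ and $Q$ rules out any clash among the cells incident to $v$; the only remaining danger is that a vertex or edge of the fan not incident to $v$ coincides with part of $D$, and this is excluded by the uniqueness of geodesics in the CAT($-1$) space $\Sigma_\Gamma$, by an argument of exactly the type used in Lemmas~\ref{m+1} and~\ref{flats} — such a coincidence would yield a locally geodesic loop, or a geodesic bigon, that the negative curvature forbids. Second, the invariant is preserved: in $D_{k+1}$ the vertex $v$ is interior with $2m$ incident squares, hence flat, no other interior vertex is touched, and each vertex of the fan other than $v$ is incident to one or two of the $T_j$ and so is a convex boundary vertex. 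Third, the side effects are controlled: the two old neighbours $p,p'$ of $v$ on $\partial D$ each acquire one extra square, and since by Lemma~\ref{flats} neither was concave, each becomes convex, flat, or — if it was flat — concave, any newly created concave vertex being adjacent along $\partial D_{k+1}$ to a convex fan vertex. Hence every concave vertex created by filling $v$ lies strictly closer along the boundary to a convex vertex than $v$ was; as $\partial D$ is a finite cycle and, by Lemma~\ref{flats}, the concave vertices are separated by convex ones, we may clear one block of consecutive concave/flat vertices at a time, each block being exhausted in finitely many moves (this can be made precise with a lexicographic measure on the pair consisting of the number of concave vertices and the total boundary distance from concave vertices to convex vertices). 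When no concave boundary vertex remains we have an embedded disk $D'\supseteq D(p,q)$ with all interior vertices flat and all boundary vertices flat or convex, i.e.\ a convex disk.

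I expect the embeddedness step to be the real obstacle: the fan of new squares exists abstractly with no trouble, and the termination bookkeeping and the verification of the invariant and side effects are routine, but ruling out that the fan ``wraps around'' and re-meets $D$ genuinely requires exploiting the CAT($-1$) geometry of $\Sigma_\Gamma$.
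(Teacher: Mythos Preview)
Your approach is essentially the paper's: fill each concave boundary vertex by completing the fan of $m+1$ squares at $v$ to a full $2m$-apartment in the link, then propagate along the boundary until hitting convex vertices. Your link argument via girth and diameter is a valid alternative to the paper's direct use of the apartment axiom together with convexity of apartments in the CAT($1$) link; and your explicit attention to embeddedness of $D_{k+1}$ is a genuine addition, since the paper simply asserts that ``$D_1$ is a disk'' without comment.

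There is, however, one real looseness. You invoke Lemma~\ref{m+1} and Lemma~\ref{flats} for $D_k$, but both are stated and proved only for $D(p,q)$: their proofs rely on the geodesic $\sigma$, which has no meaning for the enlarged disks. The paper avoids this by always working outward from an \emph{original} concave vertex of $D(p,q)$. In that scheme, each vertex being filled either was concave in $D(p,q)$ (so Lemma~\ref{m+1} gives exactly $m+1$ squares) or was flat in $D(p,q)$ and has just acquired one extra square from the previous fan (so again $m+1$); and its outward neighbour along $\partial D(p,q)$ is untouched, so Lemma~\ref{flats} applied to $D(p,q)$ still guarantees the propagation halts at a convex vertex. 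If you want to keep your one-vertex-at-a-time iteration, you must add ``every concave boundary vertex has exactly $m+1$ incident squares and no concave neighbour'' to your invariant and verify it is preserved by the move; as written, the bare invariant (embedded disk with flat interior vertices) does not rule out a concave vertex with $m+2$ or more squares, and your citation of Lemma~\ref{m+1} does not fill that gap.
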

\begin{proof}
Notice that $D(p, q)$ contains no interior vertices. If $\partial D(p,q)$ contains no concave vertices, then we are done. So suppose there is a concave vertex $v\in\partial D(p,q)$. Denote the remaining vertices on $\partial D(p,q)$ by $v_1, \dots, v_k$ proceeding counterclockwise from $v$. As $v$ is a concave vertex, there are exactly $m+1$ squares in $D(p,q)$ incident to $v$. They give rise to an edge path of combinatorial length $m+1$ in the link of $v$: $e_1,e_2,\dots, e_{m+1}$. Since the vertex link at each vertex in $\Sigma_\Gamma$ is isomorphic to $\Gamma$, there is a cycle $C$ of combinatorial length $2m$ in the link of $v$ that contains $e_1$ and $e_{m+1}$ by definition of a generalized $m$-gon. $C$ then contains $e_2,\dots, e_{m}$ since every cycle of combinatorial length $2m$ in $\Gamma$ is convex in $\Gamma$. Let $e_{m+2},\dots,e_{2m}$ denote the other $m-1$ edges in $C$. They correspond to $m-1$ squares $S_{m+2},\dots,S_{2m}$ in $\Sigma_\Gamma$ incident to $v$. Let $D_1$ be the union of $D(p,q)$ and $S_{m+2},\dots, S_{2m}$. Then $D_1$ is a disk and the vertex $v$ is a flat vertex contained in the interior of $D_1$.

Note that while we have already shown that $v_1,v_k$ cannot be concave in $\partial D(p,q)$, the construction of $D_1$ will result in the angles at $v_1, v_k$ increasing. If both $v_1,v_k$ are convex vertices in $\partial D(p,q)$, then they are either convex or flat in $\partial D_1$ and thus $\partial D_1$ has one less concave vertex than $\partial D(p,q)$. So suppose $v_1$ is flat in $\partial D(p,q)$. Then $v_1$ is concave in $\partial D_1$. Again, the vertex link of each vertex in $\Sigma_\Gamma$ is isomorphic to $\Gamma$, so we may repeat the process we used with $v$ at $v_1$ to extend $D_1$. Doing so yields a disk $D_2$ which contains $D_1$ and $v_1$ is a flat vertex contained in the interior of $D_2$. We then check if $v_2$ is concave in $\partial D_2$, continuing counterclockwise in this manner. The previous lemma and our above note ensure that the process will terminate at some $v_i$ which is convex in $\partial D(p,q)$. Repeating this procedure with $v_k$ traveling clockwise as needed, the result is a disk $D'$ containing $D(p,q)$ with less concave vertices in its boundary than in $\partial D(p,q)$.

We repeat the procedure with each concave vertex of $\partial D'$, the process eventually terminating with a convex disk $D''$ containing $D(p,q)$.
\end{proof}

\begin{lemma}
For every convex disk $D\subset \Sigma_\Gamma$ and every vertex $v$ in $\partial D$ there exists a convex disk $D'\subset\Sigma_\Gamma$ containing $D$ such that $v$ lies in the interior of $D'$.
\end{lemma}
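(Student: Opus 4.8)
The plan is to first adjoin to $D$ the squares needed to completely surround $v$, turning $v$ into an interior flat vertex, and then to repair the at most two concave vertices this creates by running the extension procedure from the proof of Lemma~\ref{disk extension}.

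First I would analyze the squares of $D$ at $v$. Since $v\in\partial D$ and $D$ is a convex disk, the angle of $D$ at $v$ is at most $\pi$, so $D$ contains exactly $j$ squares incident to $v$ for some $1\le j\le m$; list them counterclockwise as $T_1,\dots,T_j$, so that $T_i$ meets $T_{i+1}$ along an edge through $v$ and the remaining edges of $T_1$ and $T_j$ through $v$ lie on $\partial D$, with far endpoints $u$ and $w$ respectively. In the link of $v$, which is isomorphic to $\Gamma$, these squares give a simple edge path $f_1,\dots,f_j$ of combinatorial length $j\le m$. By property~(1) of a generalized $m$-gon there is a $2m$-circuit $C$ in $\Gamma$ containing $f_1$ and $f_j$; since every such circuit is convex in the CAT($1$) metric on $\Gamma$ and (for $j\ge 3$) the sub-path $f_2\cdots f_{j-1}$ has length $(j-2)\pi/m<\pi$ with endpoints lying on $f_1\cup f_j\subseteq C$, the circuit $C$ contains all of $f_1,\dots,f_j$ --- this is exactly the reasoning used in the proof of Lemma~\ref{disk extension}, and the cases $j\le 2$ are immediate. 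The remaining $2m-j$ edges of $C$ correspond to squares $T_{j+1},\dots,T_{2m}$ of $\Sigma_\Gamma$ incident to $v$ and not lying in $D$; put $D_0=D\cup T_{j+1}\cup\dots\cup T_{2m}$. As in Lemma~\ref{disk extension}, $D_0$ is again a topological disk --- it is $D$ with a disk glued along the boundary arc at $v$ formed by the two $\partial D$-edges of $T_1$ and $T_j$ through $v$ --- and $v$ is now incident to exactly $2m$ squares of $D_0$, hence is an interior flat vertex.

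Next I would observe that $D_0$ can fail to be a convex disk only at $u$ and $w$: every other vertex of $D_0$ meets exactly the squares of $D_0$ it already met in $D$ (the new squares all contain $v$, and only $T_{2m}$ reaches $u$ and only $T_{j+1}$ reaches $w$), while every vertex new to $D_0$ meets at most two new squares and is therefore convex. Thus $u$ and $w$ each gain exactly one square, so such a vertex remains flat or convex if it was convex in $\partial D$ and becomes concave \emph{with exactly $m+1$ incident squares} if it was flat in $\partial D$. If neither becomes concave we are done with $D'=D_0$; otherwise I would apply the procedure from the proof of Lemma~\ref{disk extension} to $D_0$: at a concave boundary vertex with exactly $m+1$ incident squares one uses a $2m$-circuit in its link to adjoin $m-1$ further squares, converting it into an interior flat vertex and at worst producing a new concave vertex at a neighbour --- again with exactly $m+1$ incident squares, by the same one-new-square count. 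Because each step only adjoins squares and never deletes any, $v$ stays an interior flat vertex throughout, and by the argument of Lemma~\ref{disk extension} the process terminates in a convex disk $D'\supseteq D_0\supseteq D$ with $v$ in its interior.

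I expect the main obstacle to be the topological bookkeeping rather than anything metric: one must check that the adjoined fan $T_{j+1}\cup\dots\cup T_{2m}$ meets $D$ only along the claimed boundary arc, so that $D_0$ is genuinely a disk, and likewise that each later adjoined fan does not run back into the current disk. This is the analogue of the (asserted) claim that ``$D_1$ is a disk'' in the proof of Lemma~\ref{disk extension}, and it is where the CAT($-1$) geometry of $\Sigma_\Gamma$ is used to rule out such collisions. The only other point to nail down, the fact that every concave vertex produced along the way has precisely $m+1$ incident squares so that the extension step of Lemma~\ref{disk extension} applies verbatim, is the one-new-square count indicated above.
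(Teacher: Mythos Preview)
Your proposal is correct and follows essentially the same approach as the paper: adjoin the $2m-j$ squares coming from a $2m$-circuit in the link of $v$ to make $v$ an interior flat vertex, then invoke the extension procedure of Lemma~\ref{disk extension} on each side of $v$ to remove the at most two concave boundary vertices this creates, terminating at convex vertices of $\partial D$ (of which there are at least three). The paper's proof is terser and does not spell out the one-new-square count or the topological bookkeeping you flag, but the strategy and the ingredients are the same.
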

\begin{proof}
Let $D$ be a convex disk in $\Sigma_\Gamma$ and take $v\in\partial D$. As previously noted there are at least two convex vertices on $\partial D$ that are different from $v$ which itself is either convex or flat in $\partial D$. Thus, there are k squares in $D$ incident to $v$ with $1\leq k<m+1$, giving rise to an edge path of combinatorial length $k$ in the link of $v$: $e_1,e_2,\dots, e_k$. Again, the vertex link at $v$ is isomorphic to $\Gamma$, implying the existence of a cycle $C$ of length $2m$ in the link of $v$ that contains $e_1,\dots,e_k$. Let $e_{k+1},\dots,e_{2m}$ be the remaining $2m-k$ edges in $C$. These edges correspond to $2m-k$ squares $S_{k+1},\dots,S_{2m}$ in $\Sigma_\Gamma$ incident to $v$. We let $D_1$ be the union of $D$ and the squares $S_{k+1},\dots,S_{2m}$. Then $D_1$ is topologically a disk. We now extend $D_1$ as in the proof of Lemma \ref{disk extension} on both sides of $v$ until we reach convex vertices (there are at least three convex vertices as we observed above). The resulting disk is the desired convex disk.
\end{proof}

\begin{theorem}\label{Thm}
Let $\Gamma$ be the graph of a finite generalized thick $m$-gon with $m\geq 3$. Then the Davis complex $\Sigma_\Gamma$ associated with the right-angled Coxeter group $W_\Gamma$ admits a Fuchsian building structure.
\end{theorem}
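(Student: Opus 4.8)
The plan is to combine the ingredients of Sections~3 and~4. First I would fix the piecewise hyperbolic metric on $\Sigma_\Gamma$ described above: replace each Euclidean square by the regular hyperbolic $4$-gon $R$ with interior angle $\pi/m$ at every vertex. Each vertex link is $\Gamma$ with all edges of length $\pi/m$, which is CAT($1$), so by Gromov's link condition $\Sigma_\Gamma$ is CAT($-1$). By Proposition~\ref{prop} the edges and vertices of $\Sigma_\Gamma$ can be labelled by $\{1\},\dots,\{4\}$ and $\{1,2\},\dots,\{4,1\}$ so that every $2$-cell, carrying this labelling, is isomorphic to $R$ (labelled cyclically as in Section~2.3 with $k=4$). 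Thus $\Sigma_\Gamma$ meets the ``chamber'' hypothesis of Definition~\ref{fuch}, and it remains to exhibit a family of apartments isomorphic to $A_R$, to verify axioms (1) and (2), and to identify the integers $q_i$.

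For axiom~(1): given chambers $P,Q$, pick interior points $p,q$ whose connecting geodesic $\sigma$ avoids the vertices, form $D(p,q)$, and use Lemma~\ref{disk extension} together with the lemma following it to build a nested sequence of convex disks $D(p,q)\subseteq D_0\subseteq D_1\subseteq\cdots$ in which every vertex that ever occurs eventually lies in the interior of some $D_n$. Let $\mathcal A=\bigcup_n D_n$. A convex disk is locally convex in the CAT($-1$) space $\Sigma_\Gamma$ and compact, hence geodesically convex, so $\mathcal A$ is a convex, and therefore isometrically embedded, subcomplex. Every vertex of $\mathcal A$ is interior and flat, i.e.\ lies in exactly $2m$ squares, and $2m\cdot(\pi/m)=2\pi$, so $\mathcal A$ is a complete, simply connected surface that is locally isometric to $\mathbb H^2$ everywhere; thus $\mathcal A$ is isometric to $\mathbb H^2$, tessellated by copies of $R$ meeting $2m$ to a vertex. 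That tessellation is the Coxeter tessellation underlying $A_R$, and the labelling $\mathcal A$ inherits from $\Sigma_\Gamma$ satisfies the conditions of Proposition~\ref{prop}; since two such labellings that agree on one chamber agree everywhere and the full symmetry group of a chamber is realized by automorphisms of the $\{4,2m\}$ tessellation, $\mathcal A$ is isomorphic to $A_R$ as a labelled $2$-complex. As $\mathcal A\supseteq P\cup Q$, axiom~(1) follows.

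For axiom~(2): let $A_1,A_2$ be apartments containing a common chamber $C$. Being convex subsets of a CAT($-1$) space, they meet in a convex (hence connected) subcomplex containing $C$. I would build a label-preserving isomorphism $f\colon A_1\to A_2$ by transport along galleries, starting from $f|_C=\mathrm{id}_C$: on crossing an edge $e$ from a chamber $D$ to the adjacent chamber $D'$ of $A_1$, one passes from $f(D)$ across $f(e)$ to the unique other chamber of $A_2$ containing $f(e)$ (unique because in $A_2\cong A_R$ every edge lies in exactly two chambers), extending $f$ over the new edges and vertex; at a vertex $v$ the compatibility of the two possible local extensions is exactly axiom~(2) for generalized $m$-gons applied in $\mathrm{lk}(v)\cong\Gamma$, the two circuits of length $2m$ being the links of $A_1$ and of $A_2$ at $v$, which share the edge coming from a chamber incident to $v$ lying in both apartments. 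Since $A_1\cong A_R$ is simply connected, any two galleries joining two chambers differ by elementary homotopies across vertex stars, and the same axiom~(2) makes the transport invariant under each such homotopy, so $f$ is well defined; transporting along a gallery contained in the convex, hence gallery-connected, subcomplex $A_1\cap A_2$ shows $f$ restricts to the identity there. This exhibits $\Sigma_\Gamma$ as a two dimensional hyperbolic building. To see it is Fuchsian, note by Lemma~\ref{degree}(3) that all edges with a given label have equal degree, and the degree of an edge equals the valence of the corresponding vertex of $\Gamma$; writing $r\ge 2$ and $b\ge 2$ for the valences at red and blue vertices of the thick polygon $\Gamma$, the edges labelled $1,3$ lie in $r+1$ chambers and those labelled $2,4$ in $b+1$ chambers, so one sets $q_1=q_3=r$ and $q_2=q_4=b$.

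I expect the delicate point to be the well-definedness of the transport map in axiom~(2): the reduction of gallery-independence to invariance under the single elementary homotopy that replaces one arc of a $2m$-circuit in a vertex link by the complementary arc, and the bookkeeping needed to match the isomorphism supplied by axiom~(2) of generalized $m$-gons with the edge and vertex labellings. The identification of an apartment $\mathcal A$ with $A_R$ as a labelled complex is a secondary but still technical step, resting on the reflexibility of the $\{4,2m\}$ tessellation of $\mathbb H^2$, which guarantees that every symmetry of a single chamber extends to the whole tessellation.
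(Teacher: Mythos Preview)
Your proposal is correct and follows the paper's proof: the piecewise hyperbolic metric, the labelling via Proposition~\ref{prop}, apartments built as unions of nested convex disks from Lemma~\ref{disk extension} and the lemma after it, and the $q_i$ from Lemma~\ref{degree}(3) are exactly the paper's argument. The only difference is that where you work hard on axiom~(2), the paper simply declares it ``clearly satisfied,'' implicitly relying on convexity: since each apartment is convex in the CAT($-1$) space $\Sigma_\Gamma$, the intersection $A_1\cap A_2$ is convex, and the unique label-preserving isometry $f\colon A_1\to A_2$ fixing a common chamber pointwise must fix all of $A_1\cap A_2$ (each point there has the same distance to every point of the chamber whether computed in $A_1$, in $A_2$, or in $\Sigma_\Gamma$, and a point of $\mathbb H^2$ is determined by its distances to an open set). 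Your gallery-transport construction also works, but note that the generalized-$m$-gon axiom in $\mathrm{lk}(v)$ is only available when $v\in A_1\cap A_2$; at vertices of $A_1$ outside $A_2$, consistency of the transport around $v$ follows instead from the Coxeter-complex structure of $A_2\cong A_R$ itself (alternately crossing $2m$ panels of types $i$ and $i{+}1$ is a closed gallery), so the delicate point you anticipate largely dissolves once one uses the metric argument.
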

\begin{proof}
Using our previous notation we take $R$ to be a regular 4-gon in $\mathbb{H}^2$ with angle $\pi/m$ at each vertex and edges labeled \{1\},\{2\},\{3\},\{4\} proceeding counterclockwise. Thus $A_R$ will be a tessellation of $\mathbb{H}^2$ by the images of $R$ under $W$, the Coxeter group generated by the reflections about the edges of $R$. Considering $\Sigma_\Gamma$ as a labeled 2-complex (see Section 3), we need only verify that $\Sigma_\Gamma$ satisfies the properties listed in Definition \ref{fuch}. 

Property 2 is clearly satisfied and Lemma \ref{degree} gives us the existence of the necessary integers $q_i,i=1,2,3,4$. All that remains is to show that given two squares $P,Q\subset\Sigma_\Gamma$ we can find a copy of $A_R$ containing them. Choose two interior points $p\in P,q\in Q$ and join them by a unique geodesic $\sigma$ in $\Sigma_\Gamma$. By perturbing $p$ we may assume $\sigma$ does not pass through any vertices of $\Sigma_\Gamma$ as before. Taking $D(p,q)$ to be as previously defined, we obtain a nested sequence of convex disks whose union is a copy of $A_R$ that contains both $P$ and $Q$. Thus, $\Sigma_\Gamma$ admits a Fuchsian building structure. 
\end{proof}
We can now prove our main result.
\begin{proof}[Proof of Theorem \ref{main}]
As $\Sigma_{\Gamma_i}$, $i=1,2$, is a Fuchsian building by Theorem \ref{Thm}, it follows from Theorem \ref{xie} that any quasi-isometry $f:\Sigma_{\Gamma_1}\rightarrow \Sigma_{\Gamma_2}$ is at a finite distance from an isomorphism. In particular, if $W_{\Gamma_1}$ and $W_{\Gamma_2}$ are quasi-isometric, then there exists a quasi-isometry $f:\Sigma_{\Gamma_1}\rightarrow \Sigma_{\Gamma_2}$. As previously stated, $f$ lies at finite distance from an isomorphism implying that $\Sigma_{\Gamma_1}$ and $\Sigma_{\Gamma_2}$ are isomorphic. Recall that the vertex links of $\Sigma_{\Gamma_i}$ are $\Gamma_i$. Since $\Sigma_{\Gamma_1}$ and $\Sigma_{\Gamma_2}$ are isomorphic, the vertex links $\Gamma_1$ and $\Gamma_2$ must also be isomorphic. Conversely, if $\Gamma_1,\Gamma_2$ are isomorphic, then clearly $W_{\Gamma_1},W_{\Gamma_2}$ are isomorphic, and so are quasi-isometric.
\end{proof}

The following theorem due to Kapovitch-Kleiner-Leeb allows us to prove Corollary \ref{cor2}.

\begin{theorem}[Theorem B in \cite{KKL98}]\label{prod}
Suppose $M=Z\times\prod\limits_{i=1}^{k}M_i$ and $N=W\times\prod\limits_{i=1}^{l}N_i$ are geodesic metric spaces such that the asymptotic cones of $Z$ and $W$ are homeomorphic to $\mathbb{R}^n$ and $\mathbb{R}^m$ respectively, and the components $M_i,N_j$ are of coarse type I or II. Then for every $L\geq 1,A\geq 0$ there is a constant $D$ so that for each $(L,A)$-quas-isometry $\phi:M\rightarrow N$ we have $k=l,n=m$ and after reindexing the factors $N_j$ there are quasi-isometries $\phi_i:M_i\rightarrow N_i$ such that for every $i$ the following diagram commutes up to error at most $D$:
\begin{equation*}
\begin{tikzcd}
M \arrow{r}{\phi} \arrow[swap]{d}{} & N \arrow{d}{} \\
M_i \arrow{r}{\phi_i} & N_i
\end{tikzcd}
\end{equation*}
\end{theorem}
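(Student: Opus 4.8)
The plan is to reduce the statement to a rigidity statement about asymptotic cones, where an $(L,A)$-quasi-isometry becomes a bilipschitz homeomorphism and a metric product becomes a metric product, and then to recover the uniform constant $D$ by a contradiction-and-ultralimit argument. I would begin by recording two standard facts. First, for every non-principal ultrafilter $\omega$ and every divergent scaling sequence, an $(L,A)$-quasi-isometry $\phi\colon M\to N$ induces a bilipschitz homeomorphism $\phi_\omega$ between the corresponding asymptotic cones, with bilipschitz constant depending only on $L$ (coarse surjectivity of $\phi$ becomes genuine surjectivity of $\phi_\omega$). Second, the asymptotic cone of a metric product is the metric product of the asymptotic cones, so
\[\operatorname{Cone}_\omega(M)\cong \operatorname{Cone}_\omega(Z)\times\prod_{i=1}^{k}\operatorname{Cone}_\omega(M_i),\]
with $\operatorname{Cone}_\omega(Z)$ \emph{homeomorphic} to $\mathbb R^n$ by hypothesis, and likewise $\operatorname{Cone}_\omega(W)$ homeomorphic to $\mathbb R^m$.

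The substance of the proof is a uniqueness theorem for such product decompositions, and this is exactly where the hypothesis that the components are of coarse type I or II enters: that hypothesis is precisely what forces each factor $\operatorname{Cone}_\omega(M_i)$ to be \emph{irreducible} — it admits no nontrivial metric product splitting and splits off no Euclidean line — and, crucially, to carry an intrinsic structure by which its parallel copies inside any ambient product are topologically detectable (cut points everywhere in the type I, hyperbolic-like, case; a canonical singular/regular flat structure in the type II, building- or higher-rank-symmetric-space-like, case). Granting this, I would prove: if $\mathbb R^n\times\prod_i A_i$ is bilipschitz homeomorphic to $\mathbb R^m\times\prod_j B_j$ with all $A_i,B_j$ irreducible in this sense, then $n=m$, $k=l$, and after reindexing the homeomorphism is, up to bounded correction, a product of bilipschitz homeomorphisms $A_i\to B_i$ together with one of the Euclidean factors. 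The mechanism: the locally Euclidean (manifold) points of the cone form exactly the Euclidean factor, so a bilipschitz — hence homeomorphic — identification must match these subsets and, by local homology, forces $n=m$; the complement is the product of the irreducible non-manifold factors, which are then recognized and permuted one at a time using their intrinsic cut-point or flat structure.

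The step I expect to be the main obstacle is twofold: making the internal recognition of factors genuinely work, and upgrading ``holds in every asymptotic cone'' to ``holds in $M$ with one constant $D$.'' For the first, the difficulty is that a bilipschitz homeomorphism preserves neither geodesics nor product structures a priori, so every characterization of a factor must be phrased purely topologically or in coarse metric terms, and this is the reason one cannot dispense with the restriction to types I and II. For the second, I would argue by contradiction: if no $D$ sufficed for the family of $(L,A)$-quasi-isometries, choose $\phi^{(j)}$ and basepoints realizing a discrepancy tending to infinity, rescale, and pass to an $\omega$-ultralimit; the resulting bilipschitz homeomorphism of cones would contradict the uniqueness-of-decomposition statement just established. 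Once $D$ is fixed, the maps $\phi_i\colon M_i\to N_i$ are obtained by composing $\phi$ with the projection $N\to N_i$ and restricting to a slice $M_i\times\{\mathrm{pt}\}$: the product-respecting property then immediately gives that each $\phi_i$ is a quasi-isometry and that the displayed square commutes up to error at most $D$.
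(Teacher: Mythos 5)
First, note that the paper does not prove this statement at all: it is imported verbatim as Theorem B of \cite{KKL98} and used as a black box in the proof of Corollary \ref{cor2}, so the only meaningful comparison is with the Kapovich--Kleiner--Leeb argument itself. Your outline does track that strategy in broad strokes: pass to asymptotic cones, where an $(L,A)$-quasi-isometry becomes a bi-Lipschitz homeomorphism and the cone of a product is the product of the cones; establish a uniqueness statement for such product decompositions; recover the uniform constant $D$ by a rescaling/ultralimit contradiction; and then produce the $\phi_i$ from projections and slices. Those surrounding steps are standard and fine as a plan.

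The genuine gap is exactly where you place the ``main obstacle'': the uniqueness-of-decomposition statement for the cones \emph{is} the content of the theorem, and the mechanism you sketch for it fails as stated. You propose to recognize the Euclidean factor as the set of locally Euclidean (manifold) points of the cone and to deduce $n=m$ by local homology. But when $k\geq 1$ the cone is a product of a space homeomorphic to $\mathbb R^n$ with factors which, in the type I case, are $\mathbb R$-trees branching at every point (e.g.\ cones of the hyperbolic Davis complexes used in this paper) and, in the type II case, are thick higher-rank Euclidean building cones; in either case no point of the product has a Euclidean neighborhood, so the set of manifold points is empty and detects nothing. Identifying the Euclidean de Rham factor and matching the irreducible factors under a map that a priori preserves neither geodesics nor the product structure is precisely the substantial technical work of \cite{KKL98} (resting, for the type II factors, on the Kleiner--Leeb rigidity results of \cite{KL96}), and writing ``granting this'' concedes the theorem rather than proving it. A smaller ordering issue: you invoke ``the product-respecting property'' to justify that the slice-defined $\phi_i$ are quasi-isometries, but that property is part of the conclusion; in the actual argument one must first show $\phi$ is at bounded distance from a product map (via the cone rigidity plus the ultralimit argument for uniformity) and only then define the $\phi_i$.
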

We refer the reader to \cite{KL96} for precise definitions of asymptotic cones and coarse types. We note that the asymptotic cone of a Gromov hyperbolic space is an  $\mathbb R$-tree. Thus given a generalized $m$-gon with $m\geq 3$ the corresponding Davis complex $\Sigma_\Gamma$ is of coarse type I. 
\begin{proof}[Proof of Corollary \ref{cor2}]
Say $\Gamma_1=\mathcal M_1\star \mathcal M_2\star\cdots\star\mathcal M_k$ and  $\Gamma_2=\mathcal N_1\star\mathcal N_2\star\cdots\star\mathcal N_l$ with $\mathcal M_i,\mathcal N_j\in\mathcal S$. As previously stated in Section 2.1, the corresponding right-angled Coxeter groups $W_{\Gamma_1},W_{\Gamma_2}$ will then be the direct products $W_{\Gamma_1}=\prod_{i=1}^{k}W_{\mathcal M_i}$ and $W_{\Gamma_2}=\prod_{i=1}^{l}W_{\mathcal N_i}$. It then follows that $\Sigma_{\Gamma_1}=\prod_{i=1}^{k}\Sigma_{\mathcal M_i}$ and $\Sigma_{\Gamma_2}=\prod_{i=1}^{l}\Sigma_{\mathcal N_i}$.

First suppose $f:\Sigma_{\Gamma_1}\rightarrow \Sigma_{\Gamma_2}$ is a quasi-isometry. As all of the hypotheses of Theorem \ref{prod} are clearly satisfied with $Z$ and $W$ homeomorphic to $\mathbb{R}^0$, we have $k=l$ and, after reindexing, there are quasi-isometries $\phi_i:W_{\mathcal M_i}\rightarrow W_{\mathcal N_i}$ such that $f$ is at finite distance from $(\phi_1,\phi_2,\dots,\phi_k)$. By Theorem \ref{main}, each $\phi_i$ is at a finite distance from an isomorphism. This implies that $f$ itself will be at a finite distance from an isomorphism.

In particular, if $W_{\Gamma_1}$ and $W_{\Gamma_2}$ are quasi-isometric, then $\Sigma_{\Gamma_1}$ and $\Sigma_{\Gamma_2}$ are quasi-isometric. Thus after reindexing we have that $k=l$ and $W_{\mathcal M_i}$ is quasi-isometric to $W_{\mathcal N_i}$, $i=1,\dots, k$. By Theorem \ref{main}, $\Sigma_{\mathcal M_i}$ is isomorphic to $\Sigma_{\mathcal N_i}$ and $M_i$ is isomorphic to $N_i$ for each $i$. It must then be the case that $\Gamma_1$ and $\Gamma_2$ are isomorphic. Conversely, if $\Gamma_1$ and $\Gamma_2$ are isomorphic, then clearly $W_{\Gamma_1}$ and $W_{\Gamma_2}$ are isomorphic, and so are quasi-isometric.
\end{proof}

\end{section}

\begin{section}{Constructing commensurable RACGs}
\allowdisplaybreaks
In this section we describe a method for constructing examples of commensurable RACGs. This method is inspired by a construction of Bestvina-Kleiner-Sageev in the case of right-angled Artin groups (see Example 1.4 of \cite{BKS}). Let $\Gamma$ be a finite simplicial graph and $K\subset\Gamma$ a proper clique. We construct a finite simplicial graph $\Gamma'$ from $\Gamma$ such that $W_{\Gamma'}$ is isomorphic to a finite index subgroup of $W_\Gamma$. In particular, it will follow immediately that $W_\Gamma$ and $W_{\Gamma'}$ are quasi-isometric.

Denote the vertex sets of $\Gamma$ and $K$ by $V(\Gamma)$ and $V(K)$ respectively. A {\em reduced word} for an element $g\in W_\Gamma$ is a word $w$ in $V(\Gamma)$ of minimal length that represents $g$. It is well known that $w=u_1u_2\cdots u_n\in W_\Gamma$ is reduced if and only if it can not be shortened via a sequence of operations of either deleting a consecutive pair of the form $uu$, $u\in V(\Gamma)$, or transposing two consecutive letters $u,v$ such that $uv=vu$ in $W_\Gamma$ (see Section 3.4 of \cite{Dav08}).

Define $\rho:V(\Gamma)\rightarrow V(K)\cup\{1\}$ by \[ \rho(v)=\begin{cases}
v\hspace{.5cm}\text{if}\ v\in V(K)\\
1\hspace{.5cm}\text{otherwise}.
\end{cases}\] Note that $\rho(v^2)=1$ for all $v\in V(\Gamma)$. Moreover, given $v_1,v_2\in V(\Gamma)$ such that $v_1v_2=v_2v_1$ we clearly have $\rho(v_1)\rho(v_2)=\rho(v_2)\rho(v_1)$. Hence $\rho$ is a map between generating sets that preserves relators and can therefore be extended to a homomorphism $\phi:W_\Gamma\rightarrow W_K$. Recall that $W_K$ is finite since $K$ is a clique, making $\ker\phi$ a finite index subgroup of $W_\Gamma$. We construct $\Gamma'$ so that $W_{\Gamma'}$ is isomorphic to $\ker\phi$. As a result, $W_\Gamma$ and $W_{\Gamma'}$ will be commensurable. We begin by focusing attention on finding a useful generating set for $\ker\phi$.

Enumerate the vertices of $K$ as $V(K)=\{v_1,\dots,v_k\}$. As $K$ is a clique we have $v_iv_j=v_jv_i$ for every $v_i,v_j\in V(K)$. Thus every element of $W_K$ can be written in the form $v_1^{\epsilon_1}v_2^{\epsilon_2}\cdots v_k^{\epsilon_k}$ where $\epsilon_i\in\mathbb Z_2$ for all $i$. Hence there is a clear bijection $g:\mathbb Z_2^k\rightarrow W_K$ defined by $g(\epsilon_1,\dots,\epsilon_k)=v_1^{\epsilon_1}\cdots v_k^{\epsilon_k}$. Consider the sets $T=\{g(\epsilon)vg(\epsilon):\ v\in V(\Gamma)\setminus V(K),\ \epsilon\in\mathbb Z_2^k\}$ and $R=\{g(\epsilon)vg(\epsilon)\in T:\ g(\epsilon)vg(\epsilon)\ \text{is reduced in}\ W_\Gamma\}$. Note that given $g(\epsilon)vg(\epsilon)\in T\setminus R$ there must be some $j$ with $\epsilon_j=1$ such that $(v,v_j)\in E(\Gamma)$. Then $v,v_j$ commute in $W_\Gamma$ and we have \begin{align*} v_1^{\epsilon_1}\cdots v_j\cdots v_k^{\epsilon_k}vv_1^{\epsilon_1}\cdots v_j\cdots v_k^{\epsilon_k}&=v_1^{\epsilon_1}\cdots v_{j-1}^{\epsilon_{j-1}}v_{j+1}^{\epsilon_{j+1}}\cdots v_k^{\epsilon_k}v_jvv_jv_1^{\epsilon_1}\cdots v_{j-1}^{\epsilon_{j-1}}v_{j+1}^{\epsilon_{j+1}}\cdots v_k^{\epsilon_k}\\
&=v_1^{\epsilon_1}\cdots v_{j-1}^{\epsilon_{j-1}}v_{j+1}^{\epsilon_{j+1}}\cdots v_k^{\epsilon_k}vv_1^{\epsilon_1}\cdots v_{j-1}^{\epsilon_{j-1}}v_{j+1}^{\epsilon_{j+1}}\cdots v_k^{\epsilon_k}\; \text{in}\; W_\Gamma.\end{align*} It follows inductively that there exists $\delta=(\delta_1,\dots,\delta_k)\in\mathbb Z_2^k$ such that $g(\epsilon)vg(\epsilon)=g(\delta)vg(\delta)$ in $W_\Gamma$ and $g(\delta)vg(\delta)$ is reduced. Therefore $R$ generates $\langle T\rangle$.

\begin{lemma}
$\ker\phi=\langle R\rangle$.
\end{lemma}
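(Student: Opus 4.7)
The plan is to show two inclusions. The containment $\langle R\rangle \subseteq \ker\phi$ is immediate because each generator $g(\epsilon)vg(\epsilon)\in R$ with $v\in V(\Gamma)\setminus V(K)$ satisfies
\[\phi(g(\epsilon)vg(\epsilon)) = g(\epsilon)\cdot 1\cdot g(\epsilon) = g(\epsilon)^2 = 1,\]
since every generator of $W_K$ has order $2$ and the generators of $W_K$ pairwise commute. Because $\langle T\rangle = \langle R\rangle$ by the paragraph preceding the lemma, it suffices to prove the reverse inclusion $\ker\phi\subseteq \langle T\rangle$.

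For this I will induct on the number $m$ of letters in $V(\Gamma)\setminus V(K)$ appearing in some word representation $w=u_1u_2\cdots u_n$ of a given element $w\in\ker\phi$. In the base case $m=0$ the word $w$ lies entirely in $V(K)$, so $w=\phi(w)=1$ and there is nothing to prove. For the inductive step, let $j$ be the smallest index for which $u_j\in V(\Gamma)\setminus V(K)$, and set $a=u_1\cdots u_{j-1}$, which lies in $W_K$ and hence equals $g(\epsilon)$ for some $\epsilon\in\mathbb Z_2^k$. Using $a^2=1$ in $W_\Gamma$, I rewrite
\[w \;=\; a\,u_j\,u_{j+1}\cdots u_n \;=\; \bigl(a\,u_j\,a\bigr)\cdot\bigl(a\,u_{j+1}\cdots u_n\bigr).\]
The first factor $au_j a = g(\epsilon)u_j g(\epsilon)$ lies in $T$. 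The second factor $w' := a\,u_{j+1}\cdots u_n$ lies in $\ker\phi$, because
\[\phi(w') \;=\; a\cdot \phi(u_{j+1})\cdots\phi(u_n) \;=\; \phi(u_1)\cdots\phi(u_{j-1})\cdot 1\cdot\phi(u_{j+1})\cdots\phi(u_n) \;=\; \phi(w) \;=\; 1.\]
The word $au_{j+1}\cdots u_n$ realizing $w'$ has exactly $m-1$ letters from $V(\Gamma)\setminus V(K)$, so by the inductive hypothesis $w'\in\langle T\rangle$, and therefore $w\in\langle T\rangle$ as well.

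There is no real obstacle here: the argument is a straightforward ``peel off one non-$K$ letter at a time'' induction, and the only ingredient needed beyond the definitions is the elementary identity $a\cdot u_j\cdot u_{j+1}\cdots u_n = (au_j a)(au_{j+1}\cdots u_n)$, which hinges on $a^2=1$. The one point worth emphasizing in the write-up is that after applying the inductive step, the factor $(au_j a)$ is an element of $T$ but not necessarily of $R$; this is exactly why the reduction $\langle T\rangle=\langle R\rangle$ established in the paragraph before the lemma is needed to conclude $\ker\phi=\langle R\rangle$.
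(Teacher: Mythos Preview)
Your proof is correct and follows essentially the same ``peel off conjugates of non-$K$ letters'' strategy as the paper. The only difference is organizational: you induct on the number of letters from $V(\Gamma)\setminus V(K)$ in a word for $w$, whereas the paper inducts on the total word length; your choice of induction parameter is slightly cleaner because it makes the base case trivial and avoids the paper's case split on whether $u_1\in V(K)$.
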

\begin{proof}
Given $g(\epsilon_1)u_1g(\epsilon_1)g(\epsilon_2)u_2g(\epsilon_2)\cdots g(\epsilon_p)u_pg(\epsilon_p)\in\langle R\rangle$ we have \begin{align*} &\phi(g(\epsilon_1)u_1g(\epsilon_1)g(\epsilon_2)u_2g(\epsilon_2)\cdots g(\epsilon_p)u_pg(\epsilon_p))\\
&=g(\epsilon_1)\phi(u_1)g(\epsilon_1)g(\epsilon_2)\phi(u_2)g(\epsilon_2)\cdots g(\epsilon_p)\phi(u_p)g(\epsilon_p)\\
&=g(\epsilon_1)g(\epsilon_1)g(\epsilon_2)g(\epsilon_2)\cdots g(\epsilon_p)g(\epsilon_p)=1.\end{align*} Thus $\langle R\rangle\subset\ker\phi$. To show the reverse containment let $w\in\ker\phi\setminus\{1\}$. Then there exist $u_1,u_2,\dots,u_n\in V(\Gamma), n>0,$ such that $u_i\neq u_{i+1}$ for all $i$ and $w=u_1u_2\cdots u_n$. We proceed by induction on $n$.

When $n=1$ we have $1=\phi(w)=\phi(u_1)$. Hence $u_1\in V(\Gamma)\setminus V(K)$ and $w=g(0)u_1g(0)\in\langle T\rangle$ where $0=(0,\dots,0)\in\mathbb Z_2^k$. So suppose $n>1$ and that every element of $\ker\phi$ that is a product of at most $n-1$ elements of $V(\Gamma)$ lies in $\langle T\rangle$. If $u_1\in V(\Gamma)\setminus V(K)$, then $1=\phi(u_1)\phi(w)=\phi(u_1w)=\phi(u_2\cdots u_n)$. Hence $u_2\cdots u_n\in\ker\phi$, has length $n-1$, and therefore $u_2\cdots u_n\in\langle T\rangle$ by the inductive hypothesis. Since $u_1\in V(\Gamma)\setminus V(K)$ it follows that $u_1=g(0)u_1g(0)\in\langle T\rangle$ and $w\in\langle T\rangle$. 

Now suppose $u_1\in V(K)$. Let $i\ge 1$ be the smallest integer such that $u_{i+1}\notin V(K)$ and $j>i+1$ the smallest integer such that $u_j\in V(K)$. For each $t$ with $1\leq t\leq j-1-i$ set $w_t=(u_1\cdots u_i)u_{i+t}(u_1\cdots u_i)$. Note that since $u_1,\dots,u_i\in V(K)$ we have $(u_1\cdots u_i)^2=u_1\cdots u_iu_1\cdots u_i=u_1^2\cdots u_i^2=1$. Then
\begin{align*}
w&=u_1\cdots u_iu_{i+1}\cdots u_j\cdots u_n\\
&=(u_1\cdots u_i)u_{i+1}(u_1\cdots u_i)(u_1\cdots u_i)u_{i+2}\cdots u_j\cdots u_n\\
&=w_1(u_1\cdots u_i)u_{i+2}\cdots u_j\cdots u_n\\
&=w_1(u_1\cdots u_i)u_{i+2}(u_1\cdots u_i)(u_1\cdots u_i)u_{i+3}\cdots u_j\cdots u_n\\
&=w_1w_2(u_1\cdots u_i)u_{i+3}\cdots u_j\cdots u_n\\
&\hspace{2cm}\vdots\\
&=w_1w_2\cdots w_{j-1-i}u_1\cdots u_iu_j\cdots u_n.\end{align*} 

Each $u_{i+t}\in V(\Gamma)\setminus V(K)$ for $1\leq t\leq j-1-i$ implying $\phi(w_t)=(u_1\cdots u_i)\cdot1\cdot(u_1\cdots u_i)=(u_1\cdots u_i)^2=1$ for all $t$. Therefore, \[ 1=\phi(w)=\phi(w_1w_2\cdots w_{j-1-i}u_1\cdots u_iu_j\cdots u_k)=\phi(u_1\cdots u_iu_j\cdots u_k).\] The element $u_1\cdots u_iu_j\cdots u_k$ then lies in $\ker\phi$ and has length at most $n-1$. It follows from the inductive hypothesis that $u_1\cdots u_iu_j\cdots u_k\in\langle T\rangle$. Thus $w\in\langle T\rangle$. As previously noted, $R$ generates $\langle T\rangle$. Hence $w\in\langle R\rangle$ and $\ker\phi\subset\langle R\rangle$.
\end{proof}

We now construct the graph $\Gamma'$. Let $R'$ be a set and $h:R\rightarrow R'$ a bijection. Denote $h(g(\epsilon)vg(\epsilon))$ by $v(\epsilon)$. Take $R'$ to be the vertex set of $\Gamma'$. Two vertices $u(\epsilon_1)$ and $v(\epsilon_2)$ are joined by an edge in $\Gamma'$ if $(u,v)\in E(\Gamma)$ and there is some $\epsilon\in\mathbb Z_2^k$ such that $g(\epsilon_1)u g(\epsilon_1)=g(\epsilon)u g(\epsilon)$ in $W_\Gamma$
and  $g(\epsilon_2)v g(\epsilon_2)=g(\epsilon)v g(\epsilon)$ in $W_\Gamma$. From this construction $\Gamma'$ is finite and simplicial.

Visually, $\Gamma'$ is formed by joining together multiple isomorphic copies of the graph $\Lambda$, the graph formed by removing the subgraph $K$ along with all edges incident to a vertex in $K$ from $\Gamma$. For each $\epsilon\in\mathbb Z_2^k$ define the graph $\Lambda_\epsilon$ to have vertex set $T_\epsilon=\{g(\epsilon)vg(\epsilon):v\in V(\Gamma)\setminus V(K)\}$ and an edge joining vertices $g(\epsilon)vg(\epsilon),g(\epsilon)ug(\epsilon)$ if $v,u$ are adjacent in $\Lambda$. $\Gamma'$ is obtained by taking the union $\cup_{\epsilon\in\mathbb Z_2^k}\Lambda_{\epsilon}$ and identifying the vertices $g(\epsilon)vg(\epsilon)$ and $g(\delta)ug(\delta)$ if $g(\epsilon)vg(\epsilon)=g(\delta)ug(\delta)$ in $W_\Gamma$. See Figure \ref{gamma'} for an example.

\begin{figure}
\begin{tikzpicture}[scale=0.65]
\fill[line width=1pt,color=black,fill=white] (-5,0) -- (-3,0) -- (-2.381966011250105,1.9021130325903064) -- (-4,3.077683537175253) -- (-5.618033988749895,1.9021130325903073) -- cycle;
\fill[line width=1pt,color=black,fill=white] (9.61,0.01) -- (12.97,-0.02) -- (13,3.34) -- (9.64,3.37) -- cycle;
\draw [line width=1pt,color=black] (-5,0)-- (-3,0);
\draw [line width=1pt,color=black] (-3,0)-- (-2.381966011250105,1.9021130325903064);
\draw [line width=1pt,color=black] (-2.381966011250105,1.9021130325903064)-- (-4,3.077683537175253);
\draw [line width=1pt,color=black] (-4,3.077683537175253)-- (-5.618033988749895,1.9021130325903073);
\draw [line width=1pt,color=black] (-5.618033988749895,1.9021130325903073)-- (-5,0);
\draw [line width=1pt] (-0.49,1.98)-- (1.51,1.98);
\draw [line width=1pt] (3,0)-- (6,0);
\draw [line width=1pt] (3,1)-- (6,1);
\draw [line width=1pt] (3,2)-- (6,2);
\draw [line width=1pt] (3,3)-- (6,3);
\draw [line width=1pt,color=black] (9.61,0.01)-- (12.97,-0.02);
\draw [line width=1pt,color=black] (12.97,-0.02)-- (13,3.34);
\draw [line width=1pt,color=black] (13,3.34)-- (9.64,3.37);
\draw [line width=1pt,color=black] (9.64,3.37)-- (9.61,0.01);
\begin{scriptsize}
\draw [fill=black] (-5,0) circle (2.5pt);
\draw[color=black] (-4.9,-0.45) node {$v_4$};
\draw [fill=black] (-3,0) circle (2.5pt);
\draw[color=black] (-2.8,-0.45) node {$v_3$};
\draw[color=black] (-4,-0.8) node {$\Gamma$};
\draw [fill=black] (-2.381966011250105,1.9021130325903064) circle (2.5pt);
\draw[color=black] (-2.15,2.3) node {$v_2$};
\draw [fill=black] (-4,3.077683537175253) circle (2.5pt);
\draw[color=black] (-3.87,3.4) node {$v_1$};
\draw [fill=black] (-5.618033988749895,1.9021130325903073) circle (2.5pt);
\draw[color=black] (-5.6,2.3) node {$v_5$};
\draw [fill=black] (-0.49,1.98) circle (2.5pt);
\draw[color=black] (-0.4,2.3) node {$v_1$};
\draw [fill=black] (1.51,1.98) circle (2.5pt);
\draw[color=black] (1.5,2.3) node {$v_2$};
\draw[color=black] (0.61,1.2) node {$K$};
\draw [fill=black] (3,0) circle (2.5pt);
\draw[color=black] (3,-0.4) node {$v_1v_2v_3v_1v_2$};
\draw [fill=black] (6,0) circle (2.5pt);
\draw[color=black] (6,-0.4) node {$v_1v_2v_5v_1v_2$};
\draw[color=black] (7.35,0) node {$\Lambda_{(1,1)}$};
\draw [fill=black] (3,1) circle (2.5pt);
\draw[color=black] (3,1.4) node {$v_2v_3v_2$};
\draw [fill=black] (6,1) circle (2.5pt);
\draw[color=black] (6.2,1.4) node {$v_2v_5v_2$};
\draw[color=black] (7.35,1) node {$\Lambda_{(0,1)}$};
\draw [fill=black] (3,2) circle (2.5pt);
\draw[color=black] (3,2.4) node {$v_1v_3v_1$};
\draw [fill=black] (6,2) circle (2.5pt);
\draw[color=black] (6.2,2.4) node {$v_1v_5v_1$};
\draw[color=black] (7.35,2) node {$\Lambda_{(1,0)}$};
\draw [fill=black] (3,3) circle (2.5pt);
\draw[color=black] (3,3.4) node {$v_3$};
\draw [fill=black] (6,3) circle (2.5pt);
\draw[color=black] (6,3.4) node {$v_5$};
\draw[color=black] (7,3) node {$\Lambda$};
\draw [fill=black] (4.49,3) circle (2.5pt);
\draw[color=black] (4.5,3.4) node {$v_4$};
\draw [fill=black] (4.49,2) circle (2.5pt);
\draw[color=black] (4.6,2.4) node {$v_1v_4v_1$};
\draw [fill=black] (4.47,1) circle (2.5pt);
\draw[color=black] (4.6,1.4) node {$v_2v_4v_2$};
\draw [fill=black] (4.49,0) circle (2.5pt);
\draw[color=black] (4.5,0.4) node {$v_1v_2v_4v_1v_2$};
\draw [fill=black] (9.61,0.01) circle (2.5pt);
\draw[color=black] (9.2,0) node {$v_5$};
\draw [fill=black] (12.97,-0.02) circle (2.5pt);
\draw[color=black] (13.8,0) node {$v_1v_3v_1$};
\draw [fill=black] (13,3.34) circle (2.5pt);
\draw[color=black] (13.8,3.4) node {$v_2v_5v_2$};
\draw [fill=black] (9.64,3.37) circle (2.5pt);
\draw[color=black] (9.2,3.4) node {$v_3$};
\draw [fill=black] (9.625,1.69) circle (2.5pt);
\draw[color=black] (9.2,1.8) node {$v_4$};
\draw [fill=black] (11.32,3.355) circle (2.5pt);
\draw[color=black] (11.4,3.6) node {$v_2v_4v_2$};
\draw [fill=black] (12.985,1.66) circle (2.5pt);
\draw[color=black] (14.2,1.6) node {$v_1v_2v_4v_1v_2$};
\draw [fill=black] (11.29,-0.005) circle (2.5pt);
\draw[color=black] (11.4,0.4) node {$v_1v_4v_1$};
\draw[color=black] (11.4,-0.8) node {$\Gamma'$};
\end{scriptsize}
\end{tikzpicture}
\caption{Let $\Gamma$ be the given five cycle and $K$ the clique consisting of the vertices $v_1,v_2$ and their shared edge. We obtain $\Gamma'$ by taking the union of the graphs $\Lambda, \Lambda_{(1,0)},\Lambda_{(0,1)},\Lambda_{(1,1)}$ and identifying the vertices $v_3=v_2v_3v_2$, $v_2v_5v_2=v_1v_2v_5v_1v_2v_5$, $v_1v_3v_1=v_1v_2v_3v_1v_2$, and $v_5=v_1v_5v_1$. The result is the 8-cycle pictured above.}\label{gamma'}
\end{figure}
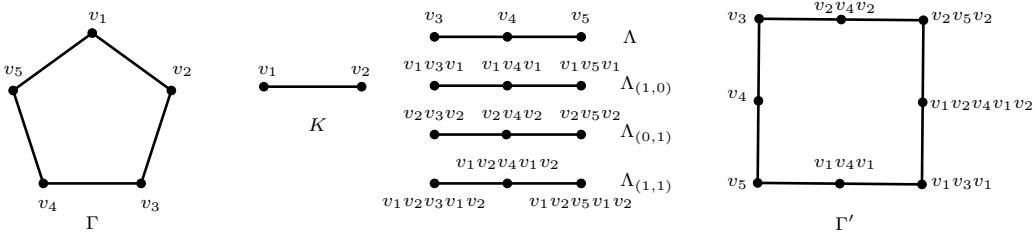

\begin{prop}\label{ker} $W_\Gamma$ and $W_{\Gamma'}$ are commensurable.
\end{prop}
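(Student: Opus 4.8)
The plan is to prove that $W_{\Gamma'}$ is isomorphic to $\ker\phi$. Since $K$ is a clique, $W_K$ is finite, so $\ker\phi$ has finite index in $W_\Gamma$; thus $W_{\Gamma'}\cong\ker\phi\leq_{\mathrm{f.i.}}W_\Gamma$ exhibits $W_\Gamma$ and $W_{\Gamma'}$ as abstractly commensurable, hence quasi-isometric.

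First I would define a homomorphism $\Psi\colon W_{\Gamma'}\to W_\Gamma$ sending each generator $v(\epsilon)=h(g(\epsilon)vg(\epsilon))$ to the group element $g(\epsilon)vg(\epsilon)\in W_\Gamma$. This respects the Coxeter presentation of $W_{\Gamma'}$: one has $(g(\epsilon)vg(\epsilon))^2=g(\epsilon)v^2g(\epsilon)=1$ because $g(\epsilon)^2=1$, and if $u(\epsilon_1),v(\epsilon_2)$ span an edge of $\Gamma'$ then by construction there is an $\epsilon$ with $g(\epsilon_1)ug(\epsilon_1)=g(\epsilon)ug(\epsilon)$ and $g(\epsilon_2)vg(\epsilon_2)=g(\epsilon)vg(\epsilon)$ in $W_\Gamma$, whence $[g(\epsilon_1)ug(\epsilon_1),\,g(\epsilon_2)vg(\epsilon_2)]=g(\epsilon)(uvuv)g(\epsilon)=1$ since $(u,v)\in E(\Gamma)$. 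By the preceding lemma the image of $\Psi$ is $\langle R\rangle=\ker\phi$, so $\Psi$ is surjective.

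The heart of the matter is injectivity, for which I would apply Reidemeister--Schreier rewriting to $\ker\phi$ with respect to the transversal $\{g(\epsilon):\epsilon\in\mathbb Z_2^k\}$; this is a genuine Schreier transversal because every prefix of the word $v_1^{\epsilon_1}\cdots v_k^{\epsilon_k}$ is again of that form. The Schreier generators coming from letters of $V(K)$ are trivial, those coming from $v\in V(\Gamma)\setminus V(K)$ are exactly the elements $g(\epsilon)vg(\epsilon)$, and rewriting the defining relators $v^2$ and $uvuv$ of $W_\Gamma$ (working through the cases according to whether $u,v$ lie in $V(K)$) produces, besides trivial relators, only three families: $(g(\epsilon)vg(\epsilon))^2=1$; $[g(\epsilon)ug(\epsilon),g(\epsilon)vg(\epsilon)]=1$ for $(u,v)\in E(\Gamma)$ with $u,v\notin V(K)$; and $g(\epsilon+e_j)vg(\epsilon+e_j)=g(\epsilon)vg(\epsilon)$ whenever $(v,v_j)\in E(\Gamma)$ with $v_j\in V(K)$. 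Using the last family to eliminate redundant generators identifies each $g(\epsilon)vg(\epsilon)$ with its unique reduced representative, collapsing the generating set to $R$; under this identification the first family becomes the squaring relators of $W_{\Gamma'}$, and the second becomes precisely the edge relators of $W_{\Gamma'}$ — which is exactly what the adjacency rule defining $\Gamma'$ was designed to encode. Hence the Tietze-simplified Reidemeister--Schreier presentation of $\ker\phi$ coincides with the Coxeter presentation of $W_{\Gamma'}$, and $\Psi$ is an isomorphism. Equivalently, one may build the inverse directly: define $\Theta\colon\ker\phi\to W_{\Gamma'}$ on the Schreier generators by sending $g(\epsilon)vg(\epsilon)$ to the $W_{\Gamma'}$-generator indexed by its reduced form, verify that the three relator families above map to relations holding in $W_{\Gamma'}$ (for the commutator family one again invokes the common exponent $\epsilon$ from the edge definition of $\Gamma'$), and observe $\Theta\circ\Psi=\mathrm{id}_{W_{\Gamma'}}$ on generators. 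I expect the only real work to be bookkeeping in the Reidemeister--Schreier step — tracking how the coset $\overline{g(\epsilon)u}$ moves as one reads $uvuv$ in the mixed case $u\in V(K),\,v\notin V(K)$, and confirming that after the identifications no edge relator beyond those recorded in $\Gamma'$ survives.
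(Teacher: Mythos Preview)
Your approach is correct and reaches the same conclusion as the paper, but the injectivity argument takes a genuinely different route. Both proofs define the same homomorphism $\Psi=H$ and deduce surjectivity from Lemma~5.1. For injectivity, the paper argues directly: it takes a shortest nontrivial $w=u_1(\epsilon_1)\cdots u_p(\epsilon_p)\in\ker H$, uses the deletion condition in $W_\Gamma$ to locate indices $s<t$ with $u_s=u_t$ commuting past everything in between, and shows this forces a shorter element of $\ker H$, whence $p=2$ and then $w=1$ by a parity check on the exponents. Your Reidemeister--Schreier argument is more structural: it computes a presentation of $\ker\phi$ outright, and the three relator families you list are exactly right --- the mixed-edge family $g(\epsilon+e_j)vg(\epsilon+e_j)=g(\epsilon)vg(\epsilon)$ is what collapses $T$ to $R$, and the surviving commutation relators match the edge rule of $\Gamma'$ on the nose. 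The paper's method is self-contained and avoids invoking Reidemeister--Schreier, at the cost of somewhat delicate word-combinatorics; your method explains conceptually \emph{why} $\Gamma'$ is defined as it is (its presentation is precisely the Tietze-reduced Schreier presentation of $\ker\phi$), at the cost of importing that machinery. Either is a complete proof.
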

\begin{proof}
As stated previously in this section it suffices to show that $\ker\phi\cong W_\Gamma'$. By construction there is a bijection $h^{-1}:R'\rightarrow R$ given by $h^{-1}(v(\epsilon))=g(\epsilon)vg(\epsilon)$. Certainly $h^{-1}(v(\epsilon)v(\epsilon))=(g(\epsilon)vg(\epsilon))^2=g(\epsilon)vg(\epsilon)g(\epsilon)vg(\epsilon)=1$ for all $v(\epsilon)\in R'$. Additionally, given $v(\epsilon_1),u(\epsilon_2)\in R'$ with $(v(\epsilon_1),u(\epsilon_2))\in E(\Gamma')$ we have $(u,v)\in E(\Gamma)$ and there must be some $\epsilon\in\mathbb Z_2^k$ such that $g(\epsilon_1)vg(\epsilon_1)=g(\epsilon)vg(\epsilon)$ in $W_\Gamma$ and $g(\epsilon_2)ug(\epsilon_2)=g(\epsilon)ug(\epsilon)$ in $W_\Gamma$. Thus \begin{align*} &h^{-1
}(v(\epsilon_1)u(\epsilon_2)v(\epsilon_1)u(\epsilon_2))\\
&=g(\epsilon_1)vg(\epsilon_1)g(\epsilon_2)ug(\epsilon_2)g(\epsilon_1)vg(\epsilon_1)g(\epsilon_2)ug(\epsilon_2)\\
&=g(\epsilon)vg(\epsilon)g(\epsilon)ug(\epsilon)g(\epsilon)vg(\epsilon)g(\epsilon)ug(\epsilon)\\
&=g(\epsilon)vuvug(\epsilon)=1\ \text{in}\ W_\Gamma.
\end{align*}
As $h^{-1}$ preserves the relators of $W_{\Gamma'}$ it can be extended to a homomorphism $H:W_{\Gamma'}\rightarrow\ker\phi$. It follows from Lemma 5.1 that $H$ is a surjection. All that remains is to show that $H$ is injective.

Suppose by way of contradiction that $\ker H$ is nontrivial. Take $w=u_1(\epsilon_1)\cdots u_p(\epsilon_p)$ $\in\ker H\setminus\{1\}$ of shortest length. Note that $p\neq 1$ for if it did, then $1=H(w)=H(u_1(\epsilon_1))=g(\epsilon_1)u_1g(\epsilon_1)\in R$ which is not possible by definition of $R$. So $p>1$. Since $1=H(w)=H(u_1(\epsilon_1)\cdots u_p(\epsilon_p))=g(\epsilon_1)u_1g(\epsilon_1)\cdots g(\epsilon_p)u_pg(\epsilon_p)$ in $W_\Gamma$, we can reduce this expression for $H(w)$ through a sequence of deletion and transposition operations in order to obtain the empty word in $W_\Gamma$.

Rewriting the above expression we have \[H(w)=g(\epsilon_1)u_1g(\epsilon_1)\cdots g(\epsilon_p)u_pg(\epsilon_p)=g(\epsilon_1)u_1g(\epsilon_1+\epsilon_2)u_2g(\epsilon_2+\epsilon_3)\cdots g(\epsilon_{p-1}+\epsilon_p)u_pg(\epsilon_p).\] Set $\epsilon_0=\epsilon_{p+1}=0$ and $\delta_i=\epsilon_{i-1}+\epsilon_i$ for $1\le i\le p+1$. Let $v_1^{a_{i,1}}v_2^{a_{i,2}}\cdots v_k^{a_{i,k}}=g(\delta_i)$ for $1\le i\le p+1$. In order for $H(w)$ to equal $1$ there must be $s,t$, $t>s$, such that $u_s=u_t$, $u_su_j=u_ju_s$ for all $s<j<t$, and $u_sv_r^{i,r}=v_r^{i,r}u_s$ for all $s+1\le i\le t$, $1\le r\le k$. Then
\begin{align*}
&g(\epsilon_s)u_sg(\epsilon_s)g(\epsilon_{s+1})u_{s+1}g(\epsilon_{s+1})\cdots g(\epsilon_t)u_tg(\epsilon_t)\\
&=g(\epsilon_s)u_sg(\delta_{s+1})u_{s+1}g(\delta_{s+2})\cdots g(\delta_t)u_tg(\epsilon_t),\hspace{2.1cm}\text{with}\ \delta_i=\epsilon_{i-1}+\epsilon_i,\\
&=g(\epsilon_s)g(\delta_{s+1})u_{s+1}g(\delta_{s+2})\cdots g(\delta_t)u_su_tg(\epsilon_t),\hspace{2.1cm}\text{since}\ u_su_j=u_ju_s, u_sv_r^{i,r}=v_r^{i,r}u_s,\\
&=g(\epsilon_s)g(\delta_{s+1})u_{s+1}g(\delta_{s+2})\cdots g(\delta_t)g(\epsilon_t),\hspace{2.8cm}\text{because}\ u_s=u_t,\\
&=g(\epsilon_s)g(\epsilon_s)g(\epsilon_{s+1})u_{s+1}g(\epsilon_{s+1})\cdots g(\epsilon_{t-1})g(\epsilon_t)g(\epsilon_t),\hspace{1cm}\text{with}\ \delta_i=\epsilon_{i-1}+\epsilon_i,\\
&=g(\epsilon_{s+1})u_{s+1}g(\epsilon_{s+1})\cdots g(\epsilon_{t-1})u_{t-1}g(\epsilon_{t-1}).
\end{align*}

Let $\zeta$ be the element of $W_\Gamma$ given by $\zeta=g(\epsilon_{s+1})u_{s+1}g(\epsilon_{s+1})\cdots g(\epsilon_{t-1})u_{t-1}g(\epsilon_{t-1})$. Then
\begin{align*}
H(w)&=g(\epsilon_1)u_1g(\epsilon_1)\cdots g(\epsilon_s)u_sg(\epsilon_s) \cdots g(\epsilon_t)u_tg(\epsilon_t)\cdots g(\epsilon_p)u_pg(\epsilon_p)\\
&=g(\epsilon_1)u_1g(\epsilon_1)\cdots g(\epsilon_{s-1})u_{s-1}g(\epsilon_{s-1})\cdot\zeta\cdot g(\epsilon_{t+1})u_{t+1}g(\epsilon_{t+1})\cdots g(\epsilon_p)u_pg(\epsilon_p)\ \text{in}\ W_\Gamma.
\end{align*}
Let $\hat w$ be the element of $W_{\Gamma'}$ given by \[\hat w=u_1(\epsilon_1)\cdots u_{s-1}(\epsilon_{s-1}) u_{s+1}(\epsilon_{s+1})\cdots u_{t-1}(\epsilon_{t-1})u_{t+1}(\epsilon_{t+1})\cdots u_p(\epsilon_p).\] Then the above calculation shows that $H(\hat w)=H(w)=1$. Hence $\hat w\in\ker H$ and has length $p-2$ in $W_{\Gamma'}$. As $w$ was chosen to be an element of $\ker H\backslash\{1\}$ with shortest length it follows that $\hat w$ must be the empty word in $W_{\Gamma'}$ and $p=2$. Then $w=u_1(\epsilon_1)u_1(\epsilon_2)$ with $u_1v_j^{a_{2,j}}=v_j^{a_{2,j}}u_1$, $1\le j\le k$, by our previous observations. 

Note that \[v_j^{a_{2,j}}=v_j^{a_{1,j}+ a_{3,j}}=\begin{cases}
1\hspace{1cm} \text{if}\ a_{1,j}= a_{3,j}\\
v_j\hspace{.85cm} \text{if}\ a_{1,j}\ne a_{3,j}
\end{cases}.\] Suppose there is $j$ such that $v_j^{a_{2,j}}=v_j$. Then exactly one of $\epsilon_1,\epsilon_2$ has $j^{th}$ coordinate $1$, say $\epsilon_1$. But this implies that $g(\epsilon_1)u_1g(\epsilon_1)$ can be reduced, which is not possible given that $g(\epsilon_1)u_1g(\epsilon_1)\in R$ and is reduced by definition of $R$. It must then be the case that $v_j^{a_{2,j}}=1$ for all $j$. In other words, $a_{1,j}=a_{3,j}$ for all $j$ and $\epsilon_1=\epsilon_2$. As a result $w=u_1(\epsilon_1)u_1(\epsilon_1)=1$ in $W_{\Gamma'}$, contradicting our assumption that $w$ was nontrivial. Therefore $\ker f$ is trivial and $\ker\phi\cong W_{\Gamma'}$ as desired.
\end{proof}
\end{section}
\vspace{-.35cm}
\bibliographystyle{amsplain}

\end{document}